\newcommand*{\rom}[1]{\expandafter\@slowromancap\romannumeral #1@}
\newtheorem{theorem}{Theorem}[section]
\newtheorem{lemma}[theorem]{Lemma}
\newtheorem{remark}[theorem]{Remark}
\newenvironment{proof}{\textit{Proof:}}{$\hfill\square$\newline}
\newcommand\apc[1]{\textcolor{purple}{#1}}
\newcommand\din{\Delta^{\text{in}}}
\newcommand\dout{\Delta^{\text{out}}}
\newcommand\pin{\mathbf{p}^{\text{in}}}
\newcommand\redout{\bgroup\markoverwith{\textcolor{red}{\rule[.5ex]{2pt}{0.8pt}}}\ULon}
\title{Slow passage through a transcritical bifurcation \\
in piecewise linear differential systems: \\
canard explosion and enhanced delay}
\author{A. Pérez-Cervera\footnotemark[1]
\and A. E. Teruel\footnotemark[3]}
\begin{document}

\maketitle
\renewcommand{\thefootnote}{\fnsymbol{footnote}}
\footnotetext{Keywords: Piecewise linear systems, dynamic bifurcations, slow passage, transcritic bifurcation, enhanced delay.}
\footnotetext[1]
{Serra Húnter Fellow. Departament de Matem\`atiques. Universitat Politècnica de Catalunya, Barcelona, Spain.}
\footnotetext[2]
{Departament de Matem\`atiques i Inform\`atica \& IAC3, Universitat de les Illes Balears, Palma de Mallorca, Spain.}

\begin{abstract}

In this paper we analyse the phenomenon of the slow passage through a transcritical bifurcation with special emphasis in the maximal delay $z_d(\lambda,\varepsilon)$ as a function of the bifurcation parameter $\lambda$ and the singular parameter $\varepsilon$. We  quantify the maximal delay by constructing a piecewise linear (PWL) transcritical minimal model and studying the dynamics near the slow-manifolds. Our findings encompass all potential maximum delay behaviours within the range of parameters, allowing us to identify: i) the trivial scenario where the maximal delay tends to zero with the singular parameter; ii) the singular scenario where $z_d(\lambda, \varepsilon)$ is not bounded, and also iii) the transitional scenario where the maximal delay tends to a positive finite value as the singular parameter goes to zero. Moreover, building upon the concepts by A.~Vidal and J.P.~Françoise (Int.~J.~Bifurc. Chaos~Appl. 2012), we construct a PWL system combining symmetrically two transcritical minimal models in such a way it shows periodic behaviour. As the parameter $\lambda$ changes, the system presents a non-bounded canard explosion leading to an enhanced delay phenomenon at the critical value. Our understanding of the maximal delay $z_d(\lambda, \varepsilon)$ of a single normal form, allows us to determine both, the amplitude of the canard cycles and, in the enhanced delay case, the increase of the amplitude for each passage.

\end{abstract}

\section{Introduction}

Slow passage or delayed bifurcations are dynamical phenomena that typically occur in ordinary differential equations with slow drifting parameters. In particular, consider a one-parameter family of differential equations $\dot{\mathbf{x}}=\mathbf{f}(\mathbf{x},y)$ and assume slow dynamics for parameter $y$, that is $y=y_0+\varepsilon t$ with $0<\varepsilon\ll 1$, what can also be expressed as the differential equation 
\begin{equation}\label{def:delayedbif}
\begin{array}{l}
\dot{\mathbf{x}}=\mathbf{f}(\mathbf{x},y),\\ 
\dot{y}=\varepsilon.
\end{array}
\end{equation}
Due to the timescale separation, the differential system \eqref{def:delayedbif} is commonly referred to as a slow-fast system, whereby $\mathbf{x}$  represents the fast variable and $y$ represents the slow variable. For such slow-fast systems, one aims to describe the flow of the full system \eqref{def:delayedbif} for $\varepsilon>0$ starting from the flow of the fast subsystem, that is, System \eqref{def:delayedbif} with $\varepsilon=0$.


Following Fenichel's results \cite{F79,J95}, compact normally hyperbolic invariant manifolds of the fast subsystem persist as slow manifolds for $\varepsilon>0$, and they are located at a $O(\varepsilon)$ Hausdorff distance of the original manifold. Fenichel's theory also describes the behaviour of the flow along the slow manifold and surrounding it.  

In particular, equilibrium points of the differential equation  $\dot{\mathbf{x}}=\mathbf{f}(\mathbf{x},y)$ give rise to a manifold of equilibria of the fast subsystem \eqref{def:delayedbif} called the critical manifold, $\mathcal{S}_0=\{(\mathbf{x},y):\mathbf{f}(\mathbf{x},y)=0\}$.  While the equilibrium point of the equation is hyperbolic, the critical manifold is normally hyperbolic and persists as a slow manifold $\mathcal{S}_\varepsilon$ for $\varepsilon>0$. Therefore, the behaviour of the flow of the full system \eqref{def:delayedbif}, on and around the slow manifold $\mathcal{S}_\varepsilon$, is described by the Fenichel's results. However, when the equilibrium point loses its hyperbolicity at a bifurcation by changing stability, branches of the slow manifold with different stability properties usually appear. In this scenario, the evolution of the slow manifolds and of the flow around them is not predicted by Fenichel's theory and depends on the interplay between the slow manifolds. Under suitable conditions, there exist trajectories that evolve close to the repelling slow manifold for a time after the bifurcation, which is regarded as a delay in the loss of stability. Therefore, canard trajectories \cite{B81,DDR21} appear as a transitional dynamical objects in the loss of stability phenomenon.

The delay in the loss of stability has been analyzed through different bifurcations including the Hopf bifurcation \cite{N87,N88,BER89,ERHG91,HKSW16}, and as of particular interest for this paper, the transcritical bifurcation \cite{krupa2001extending,H00,EK19, K23,L75, L77, H79, D94}. Transcritical singularities, have been studied by means of different methods including the blow up technique \cite{krupa2001extending}, the discretization of the flow \cite{EK19} and more recently by local linearization along the critical manifold \cite{K23}.

In this paper, we approach the slow passage through a transcritical bifurcation by constructing a piecewise linear (PWL) minimal system. PWL systems have been widely used to reproduce non linear behaviour and to provide complementary understanding about some bifurcations \cite{llibre2007horseshoes, carmona2008existence, ponce2022bifurcations}. In addition, the PWL approach has recently been used to study slow-fast dynamics. Indeed, approaching slow-fast systems by PWL systems has a significant advantage: it straightforwardly provides canonical slow manifolds, which can be explicitly computed. Since the slow manifold is an essential part of the skeleton of slow-fast dynamics, the PWL approach allows for a treatable but still meaningful problem as it does not miss the
salient features of their smooth counterpart (see for example \cite{FDKT15,DGPPRT16,DFKPT18,CFT20,CFT23}). Indeed, the PWL approach has been used recently to study delayed bifurcations, as for example the Hopf and the Homoclinic bifurcations \cite{PDTV22,PDTV23}.

When studying the delay of the loss of stability, an important function quantifying such delay is the so-called way-in/way-out function. Consider a tubular $\delta$-neighbourhood $N_\delta$ around the slow manifold. Given an orbit, the way-in/way-out function relates the distance between the bifurcation point and the inner point of the orbit through $N_\delta$, with the distance between the bifurcation point and the outer point of the orbit through $N_\delta$. Roughly speaking, the way-in/way-out function relates the ratio of repulsion and attraction of the branches of the slow manifold. The asymptotic value of the way-in/way-out function is known as the maximal delay. 

By analysing the slow passage through a transcritical bifurcation through the PWL framework, we achieve a full control on the way-in/way-out function and on the maximal delay. By means of this understanding we describe well-known situations as, for example, how the maximal delay goes to zero as the singular parameter goes to zero. However, we also tackle non trivial behaviours as for instance the degenerate situation in which the maximal delay is unbounded. Moreover, our results also cover the transitional regime from the trivial to the degenerate case. In this intermediate  regime the maximal delay is finite when the singular parameter tends to zero. 

As we discussed before, this transitional regime is related with canard trajectories. In  \cite{FPV08}, smooth slow-fast systems with two (or more) transcritical bifurcations are constructed in such a way one can have canard cycles allowing to study canard regimes and enhanced delay. Following the ideas introduced in \cite{FPV08}, we combine two of our PWL units exhibiting slow passage through a transcritical bifurcation. As a result, the orbits leaving from the repelling slow manifold of one unit connect with the stable slow manifold of the other one. In this way, we generate oscillatory behaviour by alternating the passage through one PWL unit to the other. 
In this scenario we show the existence of a one parameter family of limit cycles undergoing a canard explosion up to a critical value in which the amplitude of the canard cycles tends to infinity. When the parameter takes such a critical value, we also show the occurrence of the enhanced delay phenomenon in which each passage through each unit increases the delay of the next passage.

Our paper is organised as follows: in Section~\ref{sec:section-2}, we present our PWL minimal model for the transcritical bifurcation and provide expressions for the canonical slow manifolds. In Section~\ref{sec:mainrsults}, we state the main results of the manuscript. In Section~\ref{sec:canard-explosion} we build a PWL system exhibiting two transcritical bifurcations and apply our previous results to analyse the canard explosions and the enhanced delay occurring at this system. We encapsulate our findings in Section~\ref{sec:conclusions}. In Section~\ref{sec:paper-proofs} we provide the proof of the main results. The paper ends with appendix \ref{sec:appendix} where we provide the local expressions for the flows of our PWL minimal model.





\section{A PWL minimal model for a transcritical bifurcation}\label{sec:section-2}

Let us consider the continuous piecewise linear differential  system
\begin{equation}\label{eq:transBif}
\begin{aligned}
x'&=|x|-|y|+\lambda\varepsilon,\\
y'&=\varepsilon,
\end{aligned}
\end{equation}
with parameters $0\leq \varepsilon \ll 1$ and $\lambda>0.$ The Lipschitz character of the vector field ensures the uniqueness of the solutions of the initial condition problem. Hence, let $\varphi(t;(x_0,y_0),\lambda,\varepsilon)$ denote the solution of  \eqref{eq:transBif} with initial condition $(x_0,y_0)$. Even if System \eqref{eq:transBif} is globally non-linear, when restricted to any of the four quadrants, $Q_k$ with $k\in\{1,2,3,4\}$, it becomes linear. Therefore, local expressions for the flow can be easily obtained, see Eq.~\eqref{eq:theFlows-x}-\eqref{eq:theFlows-y}.

For $\varepsilon = 0$ the equilibrium points of System \eqref{eq:transBif} define the critical manifold
\begin{equation}\label{eq:critical-mainfold}
\mathcal{S}_0=\{(x,y): ~|x|=|y|\}.
\end{equation}
All the equilibrium points in $\mathcal{S}_0$, except the origin, are normally hyperbolic. Indeed, its respective Jacobian matrix has an eigenvalue with real part equal to zero and another eigenvalue equal to $\pm 1$. Hence, the critical manifold $\mathcal{S}_0$, is the union of: (i) four normally hyperbolic branches, two attracting $\mathcal{S}^\pm_{a,0}$ and two repelling $\mathcal{S}^\pm_{r,0}$
\begin{equation}\label{eq:critical-mainfold-branches}
\mathcal{S}_{a,0}^\pm=\left\{(x,y):~y=\mp x,\enskip x<0\right\}, \qquad \mathcal{S}_{r,0}^\pm=\left\{(x,y):~y=\pm x,\enskip x<0\right\};
\end{equation}
and (ii) the point $(0,0)$ at which the normal hyperbolicity is lost since the vector field at the origin is not differentiable. We note that the subscript $a,r$ in the slow manifold indicates attracting or repelling and the superscript $\pm$ corresponds to the sign of the $y$ variable.

Let us now study the perturbed system  ($\varepsilon > 0$). In this case, Fenichel's theory \cite{F79,J95} implies that outside of a small neighbourhood of $(0,0)$, the different branches of the critical manifold persist as slow manifolds. By computing the eigenspace associated with the slow eigenvalue of System \eqref{eq:transBif}, we can obtain analytical expressions for  canonical slow manifolds \cite{PTV16}. In that way we obtain the following  expressions for the two canonical stable slow manifolds
\begin{equation}\label{def:atrslm}
\begin{array}{l}
\mathcal{S}_{a,\varepsilon}^-=\left\{(x,y):~y=x+\varepsilon(1-\lambda),\enskip~~x<\min\{0,\varepsilon(\lambda-1)\}\right\},\\
\mathcal{S}_{a,\varepsilon}^+=\{(x,y):~y=-x+\varepsilon(1+\lambda),~x<0 \}.
\end{array}
\end{equation}
and for the two canonical unstable slow manifolds
\begin{equation}\label{def:rpslm}
\begin{array}{l}
\mathcal{S}_{r,\varepsilon}^-=\{(x,y):~y=-x-\varepsilon(1+\lambda),~x<0\},\\
\mathcal{S}_{r,\varepsilon}^+=\left\{(x,y):~y=x-\varepsilon(1-\lambda),\enskip~~x>\max\{0,\varepsilon(1-\lambda)\} \right\},
\end{array}
\end{equation}
each of them perturbing from its respective branch of the critical manifold given in \eqref{eq:critical-mainfold-branches} (see Fig.~\ref{fig:my_label}).

\begin{figure}[hb]
    \centering
    \includegraphics[width=\textwidth]{./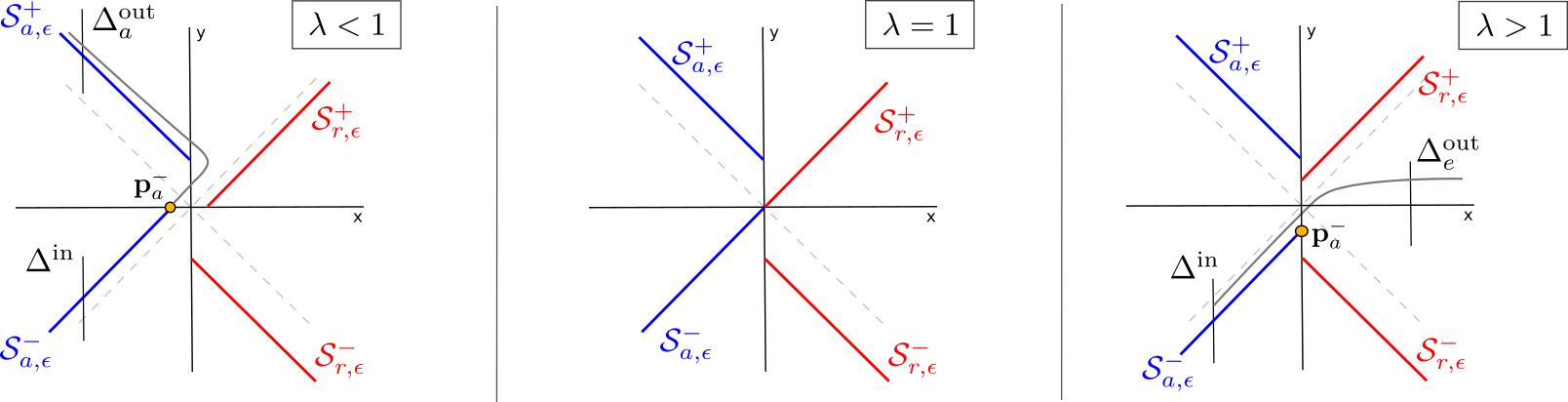}
    \caption{\textbf{Configuration of the slow manifolds}. Representation of the attracting slow manifold $\mathcal{S}_{a,\varepsilon}^{\pm}$ (blue lines) and the repelling slow manifold $\mathcal{S}_{r,\varepsilon}^{\pm}$ (red lines) of System \eqref{eq:transBif} depending on the values of $\lambda$, see expressions \eqref{def:atrslm}-\eqref{def:rpslm}. For $\lambda \neq 1$, we represent the crossing sections in \eqref{eq:crossing-sections} and sketch the evolution of the orbit in $\mathcal{S}_{a,\varepsilon}^{-}$ beyond the end point $\mathbf{p}_a^-$ given in \eqref{eq:pa-}.}
    \label{fig:my_label}
\end{figure}

\section{Statement of the main results}\label{sec:mainrsults}

In this section, for System \eqref{eq:transBif} and $\varepsilon >0$, we characterise how the trajectories in a small neighbourhood of the attracting slow manifold $\mathcal{S}_{a,\varepsilon}^-$ continue beyond the transcritical bifurcation at the origin. To this aim, we consider a tubular neighbourhood $N(\delta)$ of radius $\delta$, with $\delta>2\varepsilon$, around both the attracting $\mathcal{S}_{a,\varepsilon}^-$ and the repelling $\mathcal{S}_{r,\varepsilon}^+$ slow manifolds. By considering this tube, one can study the relationship between the point $\mathbf{p}_i=(x_i,y_i)^T$ in which an orbit $\gamma$ enters $N(\delta)$  and the point  $\mathbf{p}_o=(x_o,y_o)^T$ at which this orbit leaves $N(\delta)$. Indeed, the relationship $y_o$ versus $|y_i|$ defines a function $y_0 = \mathcal{Z}(|y_i|)$, usually denoted as the way-in/way-out function. Typically, the way-in/way-out function approaches an asymptotic value as $|y_i| \to \infty$. This value, which is a function of the parameters $\lambda, \varepsilon$, corresponds to the ordinate of the exit point $\mathbf{p}_a^-$ of the attracting slow manifold $\mathcal{S}_{a,\varepsilon}^-$ and is usually denoted as the \textit{maximal delay} $z_d(\lambda, \varepsilon)$. We quantify thorough the paper the delay in the loss of stability in terms of this maximal delay. 

Our results cover the cases $0<\lambda <1,~ \lambda = 1$ and $\lambda >1$. First, in \Cref{th:main1}, we offer a PWL perspective of the classical results by Krupa and Szmolyan in \cite{krupa2001extending} for fixed  $\lambda\neq1$. Additionally, in \Cref{th:main1delay} we interpret their results in terms of the maximal delay $z_d(\lambda, \varepsilon)$ and show that in these cases $\lim_{\varepsilon \searrow 0}z_d(\lambda, \varepsilon) = 0$. Moreover, in \Cref{th:main2}, we also study the degenerate case $\lambda = 1$ proving that for this value the maximal delay is unbounded. 

In order to bring together these two regimes we explore solutions where, instead of $\lambda$ being a fixed parameter, we consider $\lambda(\varepsilon) = 1 \pm e^{-c/\varepsilon}$ with $c \in \mathbb{R}^+$. For such a value of $\lambda$, in \Cref{th:main3}, we prove that  System~\eqref{eq:transBif}: (i) exhibits trajectories with canard segments; and (ii) its maximal delay satisfies $\lim_{\varepsilon \searrow 0}z_d(\lambda, \varepsilon) = c$ (see Fig.~\ref{fig:losDelays} and Remark~\ref{rm:exp-delays}).

\subsection{Slow passage for fixed $\lambda$}\label{sec:sec3-1}

Similarly as in \cite{krupa2001extending}, we define the crossing sections 
\begin{equation} \label{eq:crossing-sections}
\begin{array}{l}
\din=\{(-\rho,-\rho+\varepsilon(1-\lambda)+r): |r|<\delta\},\\
\dout_{a}=\{(-\rho,\rho+\varepsilon(1+\lambda)+r): |r|<\delta\},\\
\dout_{e}=\{(\rho,r): |r|<\delta\},
\end{array}
\end{equation}
where $\rho>0$ and $|\delta|\ll 1$ and the transition maps
\[
\Pi_e:\din \to \dout_{e},\quad \Pi_a:\din \to \dout_{a},
\]
from $\din$ to $\dout_{e}$ and $\dout_{a}$, respectively. Moreover, we define as $\mathbf{p}_a^-$ the point in the boundary of $\mathcal{S}_{a,\varepsilon}^-$ given by
\begin{equation}\label{eq:pa-}
    \mathbf{p}_a^-=(x,~x+\varepsilon(1-\lambda)), \quad \text{with~} x=\min\{0,~\varepsilon(\lambda-1)\},
\end{equation}
and $\gamma_{\mathbf{p}_a^-}$ as the orbit through $\mathbf{p}_a^-$. Therefore our PWL view of the results in \cite{krupa2001extending} are as follows:

\begin{theorem}\label{th:main1} Consider System \eqref{eq:transBif}. For fixed $\lambda >0$, there exists $\varepsilon_0>0$ such that the following statements hold for $\varepsilon\in (0,\varepsilon_0]$.
\begin{itemize}
\item [a)] If $\lambda>1$ then the orbit $\gamma_{\mathbf{p}_a^-}$ passes through $\dout_e$ at a point $(\rho, h(\varepsilon))$ where $h(\varepsilon)=O(\varepsilon\ln(\varepsilon))$. The section $\din$ is mapped by $\Pi_a$ to an interval containing $\gamma_{\mathbf{p}_a^-} \cap \dout_{e}$ of size $O(e^{-c/\varepsilon})$ where $c$ is a positive constant. 
\item [b)] If $\lambda <1$ then, $\din$ is mapped by $\Pi_e$ to an interval containing ${\mathcal{S}}_{a,\varepsilon}^{+}\cap \dout_{a}$ of size $O(e^{-c/\varepsilon})$ where $c$ is a positive constant.
\end{itemize}
\end{theorem}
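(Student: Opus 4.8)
The plan is to exploit the fact that \eqref{eq:transBif} is linear on each quadrant $Q_k$. There the flow is explicit (see \eqref{eq:theFlows-x}--\eqref{eq:theFlows-y}): a superposition of $e^{\pm t}$ and terms affine in $t$, and each slow manifold in \eqref{def:atrslm}--\eqref{def:rpslm} is an exactly invariant line along which the distance to the manifold decays like $e^{-t}$ (attracting branches) or grows like $e^{t}$ (repelling branches). Hence every transition map between two consecutive quadrant boundaries is an explicit, monotone, affine-type map, and the maps onto $\dout_e$ and $\dout_a$ are finite compositions of these. So the argument reduces to: (i) determining the sequence of quadrants visited by the orbit of a point close to $\mathcal{S}_{a,\varepsilon}^{-}$; (ii) estimating, leg by leg, the transit time and the contraction/expansion factor; (iii) composing. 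Since $y'=\varepsilon$, a leg shadowing a slow manifold from $|x|=\rho$ to the corner takes time $\sim\rho/\varepsilon$ and so contributes a factor $e^{-\rho/\varepsilon}$ (or $e^{+\rho/\varepsilon}$), whereas every leg inside the corner region takes time $O(1)$, or --- along a repelling branch out to $|x|=\rho$ --- time $\sim|\ln\varepsilon|$; thus the contraction produced by the incoming attracting leg always dominates.

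For part (a), with $\lambda>1$, note first that $\mathbf{p}_a^-=(0,\varepsilon(1-\lambda))$ sits on the negative $y$-axis, i.e.\ at the end of $\mathcal{S}_{a,\varepsilon}^{-}$ and on the boundary between $Q_3$ and $Q_4$. I would integrate $\gamma_{\mathbf{p}_a^-}$ forward through $Q_4$: solving $y(t)=0$ gives transit time $\lambda-1$ and entry into $Q_1$ at $\big(\varepsilon(2e^{\lambda-1}-\lambda-1),\,0\big)$, and the algebraic fact $g(\lambda):=2e^{\lambda-1}-\lambda-1>0$ for $\lambda>1$ (because $g(1)=0$, $g'>0$) is exactly what guarantees that the orbit enters $Q_1$ at a positive abscissa instead of dropping into $Q_2$. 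Inside $Q_1$ it then shadows $\mathcal{S}_{r,\varepsilon}^{+}$ starting at distance $\Theta(\varepsilon)$ from it; since that distance grows like $e^{t}$, imposing $x(T)=\rho$ yields $T=-\ln\varepsilon+O(1)$ and hence the exit height $h(\varepsilon)=y(T)=\varepsilon T=-\varepsilon\ln\varepsilon+O(\varepsilon)=O(\varepsilon\ln\varepsilon)$. For the second claim, an orbit issuing from $\din$ first crosses $Q_3$, contracting onto $\mathcal{S}_{a,\varepsilon}^{-}$ by $e^{-T_3}$ with $T_3\sim\rho/\varepsilon$, so $\din$ is squeezed to a segment of length $O(\delta e^{-\rho/\varepsilon})$ around $\mathbf{p}_a^-$; the ensuing passage through $Q_4\cup Q_1$ expands by at most $e^{\lambda-1}\cdot e^{T}=O(\varepsilon^{-1})$, which is polynomial in $1/\varepsilon$. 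Composing, the image in $\dout_e$ has length $O(e^{-c/\varepsilon})$ for any fixed $c<\rho$ (after shrinking $\varepsilon_0$), and it contains $\gamma_{\mathbf{p}_a^-}\cap\dout_e$ because $\gamma_{\mathbf{p}_a^-}$ is the forward continuation of the orbit through the midpoint of $\din$, which lies on $\mathcal{S}_{a,\varepsilon}^{-}$.

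Part (b), with $\lambda<1$, is similar. Now $\mathbf{p}_a^-=(\varepsilon(\lambda-1),0)$ lies on the negative $x$-axis, the common endpoint of $\mathcal{S}_{a,\varepsilon}^{-}$ on the boundary between $Q_3$ and $Q_2$. The $Q_3$ leg again contracts $\din$ onto $\mathcal{S}_{a,\varepsilon}^{-}$ by $e^{-T_3}$, $T_3\sim\rho/\varepsilon$. I would then integrate $\gamma_{\mathbf{p}_a^-}$ into $Q_2$, where it lands at distance $\Theta(\varepsilon)$ from the attracting manifold $\mathcal{S}_{a,\varepsilon}^{+}$ --- possibly after a bounded excursion into $Q_1$ when $\lambda$ is near $1$, which by the explicit flow never leaves an $O(\varepsilon)$-neighbourhood of the origin and costs only an $O(1)$ factor --- and then shadows $\mathcal{S}_{a,\varepsilon}^{+}$ out to $x=-\rho$ in time $\sim\rho/\varepsilon$, contracting by a further $e^{-\rho/\varepsilon}$. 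Composing the two strong contractions against the $O(1)$ corner factor, the image of $\din$ in $\dout_a$ is a segment of length $O(e^{-c/\varepsilon})$ clustered at $\mathcal{S}_{a,\varepsilon}^{+}\cap\dout_a$, the midpoint orbit $\gamma_{\mathbf{p}_a^-}$ ending within $O(\varepsilon e^{-\rho/\varepsilon})$ of that point.

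The routine content is the set of single-quadrant integrations and estimates. The main obstacle will be the corner bookkeeping: establishing the quadrant itinerary rigorously and uniformly for $\varepsilon\in(0,\varepsilon_0]$; verifying that every boundary crossing of $\gamma_{\mathbf{p}_a^-}$ --- hence, by continuity, of the exponentially nearby orbits --- is transversal, so that the successive transition maps are well defined and monotone on the sections at hand (in particular that for $\lambda>1$ all orbits from $\din$ do reach $\dout_e$, and for $\lambda<1$ all of them reach $\dout_a$); and controlling the bounded $Q_1$-excursion as $\lambda$ approaches $1$. One must also keep track of the constants: the exponent $c$ is forced to be any number strictly below $\rho$, since the $e^{-\rho/\varepsilon}$ contraction has to absorb the whole product of polynomial-in-$1/\varepsilon$ and $O(1)$ expansion factors, and this is precisely what determines $\varepsilon_0$.
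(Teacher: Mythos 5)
Your proposal is correct and follows essentially the paper's own strategy: locate $\mathbf{p}_a^-$, integrate the explicit linear flow leg by leg, and balance the $e^{-\rho/\varepsilon}$ contraction of $\din$ onto $\mathcal{S}_{a,\varepsilon}^-$ against an at-most-polynomial-in-$1/\varepsilon$ expansion through the corner; your key numbers (transit time $\lambda-1$, entry point $(\varepsilon(2e^{\lambda-1}-\lambda-1),0)=\mathbf{p}_1$, $h(\varepsilon)=O(\varepsilon|\ln\varepsilon|)$, image size $O(e^{-c/\varepsilon})$ with $c<\rho$) coincide with the paper's. Two points of comparison are worth recording. First, a cosmetic trap: the paper labels quadrants clockwise (Appendix~\ref{sec:appendix}), so your ``$Q_4$'' leg for $\lambda>1$ is the paper's $Q_2$ and your ``$Q_2$'' leg for $\lambda<1$ is the paper's $Q_4$; when importing the leg-by-leg formulas from \eqref{eq:theFlows-x} you must take the rows matching that convention. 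Second, where you estimate $h(\varepsilon)$ directly from the $e^{t}$ growth of the deviation from $\mathcal{S}_{r,\varepsilon}^+$, the paper instead compares with the auxiliary orbit of Lemma~\ref{lem:tech} (with $d=-\ln\varepsilon^{3}$); your route is slightly more direct and equally valid. The piece you defer to ``corner bookkeeping'' is exactly the only non-routine work the paper does for $\lambda<1$: rather than bounding the $Q_1$-excursion head-on, it sandwiches $\varphi(t;\mathbf{p}_a^-)$ between the orbit through the tangency point $\mathbf{p}_t=(0,\varepsilon\lambda)$ (no excursion, covering $\lambda\le\ln 2$) and the orbit through the origin, whose exit height $\varepsilon(\lambda+C(\lambda))$ defines $C(\lambda)$ in \eqref{eq:f} (Lemmas~\ref{lem:pt_behav}--\ref{lm:asm_behav_b}); this is what makes the $\lambda$-dependence of your ``$O(1)$ corner factor'', and hence of $\varepsilon_0(\lambda)$ in \eqref{def:eps0}, explicit as $\lambda\to1^-$. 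Your claim that the excursion stays in an $O(\varepsilon)$-neighbourhood of the origin and costs an $O(1)$ factor is true for each fixed $\lambda$ (with $\lambda$-dependent constants), so there is no gap, but filling in your sketch would essentially reproduce that sandwich or an equivalent explicit bound on the exit height.
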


\begin{remark}
The dynamical behaviour described in Theorem~\ref{th:main1} is completely comparable to that for the transcritical bifurcation in the smooth context  appearing in Theorem 2.1 in \cite{krupa2001extending}. Nevertheless, for $\lambda>1$, there is a quantitative difference in the size of the ordinate $h(\varepsilon)$, while in \cite{krupa2001extending} it is order $O(\sqrt{\varepsilon})$, here it is order $O(\varepsilon |\ln(\varepsilon)|)$. We note that in both cases $h(\varepsilon)$ tends to zero with $\varepsilon$. 
\end{remark}

Theorem~\ref{th:main1} describes the evolution of the slow manifold $\mathcal{S}_{a,\varepsilon}^-$, and of the orbits surrounding it,  beyond a neighbourhood of the origin, see Figure \ref{fig:my_label}. However, as we illustrate in Fig.~\ref{fig:figurica-xula}(a), it is also interesting to  describe this evolution in terms of the delay in the loss of stability, or more concretely in terms of the maximal delay $z_d(\lambda, \varepsilon)$. In the next result we board this question by revisiting Theorem~\ref{th:main1} in terms of the maximal delay for $\lambda\neq1$ but close enough to 1, see Remark~\ref{rem:los-logaritmos}.


\begin{figure}
    \centering
    \includegraphics[width=\textwidth]{./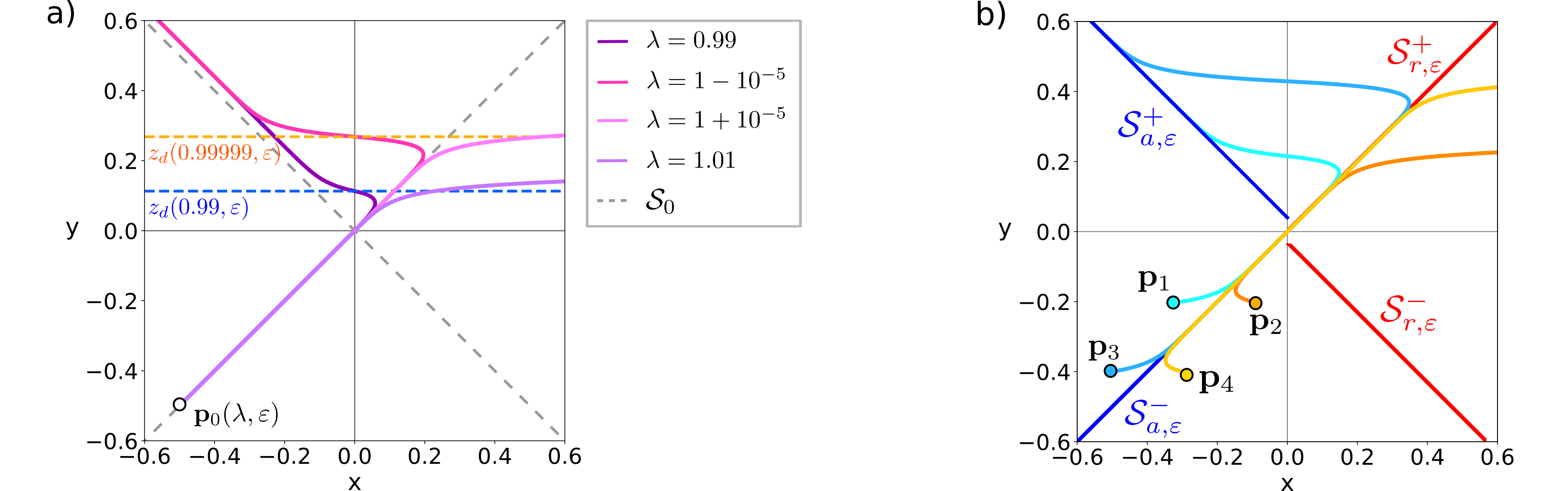}
    \caption{\textbf{Studying trajectories in terms of the maximal delay $z_d(\lambda,\varepsilon)$}. In panel (a) we fix $\varepsilon=0.02$ and consider a trajectory starting at a point $\mathbf{p}_0(\lambda, \varepsilon) = (x, x + \varepsilon(1-\lambda)) \in \mathcal{S}_{a,\varepsilon}^-$ for $\lambda \in \{0.99, 1-10^{-5},1+10^{-5},1.01\}$ and $x=-0.5$. As shown in the plot, and studied in Theorem~\ref{th:main1delay}, we observe how, for $\lambda = 1-\delta$, the crossing of the trajectory with the y-axis provides a boundary in the delay of the loss of stability which is valid for both for $\lambda = 1-\delta$ and $\lambda = 1+\delta$. In the panel (b) we consider $\varepsilon = 0.02$ and fix $\lambda=1$, so the manifolds $\mathcal{S}_{a,\varepsilon}^-$ and $\mathcal{S}_{r,\varepsilon}^+$ connect. We observe how the delay in the loss of stability increases with the distance of the initial condition $\mathbf{p}_k$ to the origin.}
    \label{fig:figurica-xula}
\end{figure}

\begin{theorem}\label{th:main1delay}
Consider System \eqref{eq:transBif} and the  differentiable function $C(\lambda)$ defined in \eqref{eq:f}. For fixed $1-\delta<\lambda <1+\delta $ with $\delta=1-\ln(2)$, there exists $\varepsilon_0>0$  such that the following statements hold for $\varepsilon\in (0,\varepsilon_0]$.
\begin{itemize}
    \item [a)] If $\lambda>1$, the maximal delay satisfies that $z_d(\lambda,\varepsilon)\leq \varepsilon(2-\lambda+C(2-\lambda))\leq O(\varepsilon\ln(|\varepsilon|))$.
    \item [b)] If $\lambda<1$, 
    the maximal delay satisfies that $z_d(\lambda, \varepsilon) \leq \varepsilon\,(\lambda+C(\lambda))$. 
\end{itemize}
\end{theorem}

\begin{remark}\label{rem:los-logaritmos} 
The value selected for parameter $\delta=1-\ln(2)$ in Theorem~\ref{th:main1delay}  is chosen for purely technical considerations and does not carry any relevant dynamic meaning within the context of the problem. This value is specifically determined to facilitate the proof of the assertion, without affecting the interpretation or validity of the results obtained.
\end{remark}

\begin{remark}\label{rem:zdcero}
 From Theorem~\ref{th:main1delay}, it follows that fixed $\lambda\neq1$,  $\lim_{\varepsilon \searrow 0}z_d(\lambda, \varepsilon) = 0$  (see Fig.~\ref{fig:losDelays}). 
\end{remark}


\begin{remark}\label{rem:casoc}
The value of $\varepsilon_0$ appearing in Theorem \ref{th:main1} and in Theorem \ref{th:main1delay} is a function of $\lambda$. Indeed, as we show in Section~\ref{sec:paper-proofs} (see Equations\eqref{def:eps0} and \eqref{def:ep1}), we obtain the following explicit expression for $\varepsilon_0(\lambda)$
\begin{equation}
    \varepsilon_0(\lambda)=\left\{
    \begin{array}{ll}
    \frac {e^{1-C(\lambda)}}{C(\lambda)-1} & \lambda<1,\\ 
    \min \left\{  \left(\frac {2(e^{\lambda-1}-1)}{\rho+\lambda-1} \right)^{\frac 1 2},
    \frac {\rho}{2e^{\lambda-1}-(\lambda+1)}\right\}  & \lambda>1,
    \end{array}
     \right.
\end{equation}
where $\rho \in \mathbb{R}$ and $C(\lambda)$ is a differentiable function. From this expression and since $\lim_{\lambda \searrow 1} C(\lambda)=+\infty$ (see Lemma~\ref{lem:exist_C}) it follows that $\varepsilon_0(\lambda)$ shrinks to $0$ as $\lambda$ tends to $1$. Therefore, $\lambda=1$ corresponds to a degenerate case. 
\end{remark}

In the next result, we study the behaviour of the flow surrounding $\mathcal{S}_{a,\varepsilon}^-$ and $\mathcal{S}_{r,\varepsilon}^+$ in the degenerate case $\lambda=1$ in which both manifolds connect (see also Fig.~\ref{fig:figurica-xula}(b)).

\begin{theorem}\label{th:main2} Consider System \eqref{eq:transBif}. For $\lambda=1$ and $\varepsilon>0$, the attracting branch $\mathcal{S}_{a,\varepsilon}^-$ and the repelling branch $\mathcal{S}_{r,\varepsilon}^+$ of the slow manifold connect at the origin. Therefore, the maximal delay is unbounded. Moreover, let $N(\delta)$ be a tubular neighbourhood of the slow manifold $\mathcal{S}_{a,\varepsilon}^-\cup \mathcal{S}_{r,\varepsilon}^+$ of radius $\delta$, then the graph of the way-in/way-out function is located between the lines $y=x+\delta$ and $y=x-\delta$. Consequently, the way-in/way-out function tends asymptotically to infinity and therefore, is not bounded.
\end{theorem}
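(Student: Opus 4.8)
The statement has two parts, which I would prove separately.

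\emph{The connection at the origin.} First I would substitute $\lambda=1$ into \eqref{def:atrslm}--\eqref{def:rpslm}. Since then $\varepsilon(1-\lambda)=0$ and $\min\{0,\varepsilon(\lambda-1)\}=\max\{0,\varepsilon(1-\lambda)\}=0$, one gets $\mathcal{S}_{a,\varepsilon}^{-}=\{(x,y):y=x,\ x<0\}$ and $\mathcal{S}_{r,\varepsilon}^{+}=\{(x,y):y=x,\ x>0\}$, so both branches are the open halves of the diagonal $\ell=\{y=x\}$ and close up at the origin. Next I would check that $\ell$ is invariant for \eqref{eq:transBif}: on $\ell\cap\overline{Q_{1}}$ one has $|x|-|y|+\varepsilon=x-x+\varepsilon=\varepsilon=y'$ and on $\ell\cap\overline{Q_{3}}$ one has $|x|-|y|+\varepsilon=-x+y+\varepsilon=\varepsilon=y'$; hence $x'=y'$ on $\ell$ and the flow there is the translation $t\mapsto(x_{0}+\varepsilon t,\,x_{0}+\varepsilon t)$. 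Consequently the orbit through any point of $\mathcal{S}_{a,\varepsilon}^{-}$ crosses the origin and then runs along $\mathcal{S}_{r,\varepsilon}^{+}$ for all positive times: it is a maximal canard that never leaves a neighbourhood of the slow manifold, so the way-in/way-out function has no finite asymptotic value and the maximal delay is unbounded.

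\emph{Shape of the way-in/way-out function.} Here the plan is to work with $u:=x-y$, which for $\lambda=1$ satisfies $u'=|x|-|y|$. A quadrant-by-quadrant computation gives $u'=-u$ on $\overline{Q_{3}}$, $u'=u$ on $\overline{Q_{1}}$, and $|u'|\le|u|$ on $\overline{Q_{2}}\cup\overline{Q_{4}}$. Hence an orbit that enters the tube $N(\delta)$ near $\mathcal{S}_{a,\varepsilon}^{-}$ (thus in $Q_{3}$) has $|u|$ strictly decreasing and cannot leave $N(\delta)$ until it has crossed the region about the origin; once in $Q_{1}$, $|u|$ strictly increases and the orbit leaves $N(\delta)$. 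Since $y'=\varepsilon>0$, the coordinate $y$ is monotone along the orbit. Writing $|u|=\delta$ at the entry $\mathbf{p}_{i}=(x_{i},y_{i})$ and the exit $\mathbf{p}_{o}=(x_{o},y_{o})$, and $|u_{\min}|$ for the least value of $|u|$ on the passage (attained near the origin, where $|x|,|y|=O(|u_{\min}|)$), the contraction time from $\delta$ to $|u_{\min}|$ in $Q_{3}$ and the expansion time from $|u_{\min}|$ back to $\delta$ in $Q_{1}$ both equal $\ln(\delta/|u_{\min}|)$ up to the short transit through $Q_{4}$ (or $Q_{2}$). Because $y$ changes at the constant rate $\varepsilon$, this forces $|y_{i}|$ and $y_{o}$ to coincide up to an error $\bigl||y_{i}|-y_{o}\bigr|<\delta$ (in fact exponentially small in $|y_{i}|/\varepsilon$), using $\delta>2\varepsilon$. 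Thus the graph $\{(|y_{i}|,y_{o})\}$ of the way-in/way-out function lies between the lines $y=x-\delta$ and $y=x+\delta$; and letting $\mathbf{p}_{i}$ run arbitrarily far down $\mathcal{S}_{a,\varepsilon}^{-}$ makes $|y_{i}|$, and with it $y_{o}\ge|y_{i}|-\delta$, arbitrarily large, so the way-in/way-out function is unbounded and tends to $+\infty$.

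\emph{Main obstacle.} The delicate point is the second part: controlling the passage through a neighbourhood of the non-hyperbolic origin and through the corner quadrants $Q_{2},Q_{4}$, where the linear pieces switch. One has to check that the orbit does not escape $N(\delta)$ there and crosses those regions in a time negligible compared with the contraction/expansion times, so that the balance underlying $\bigl||y_{i}|-y_{o}\bigr|<\delta$ survives up to a lower-order error. The inequality $|u'|\le|u|$, valid in all four quadrants, together with the explicit linear flows \eqref{eq:theFlows-x}--\eqref{eq:theFlows-y}, is the tool for this; the remainder is bookkeeping with the monotone variable $y$.
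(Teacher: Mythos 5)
Your proposal is correct, and it reaches both conclusions of the theorem, but the second half follows a genuinely different route from the paper. For the connection at the origin you do exactly what the paper does: set $\lambda=1$ in \eqref{def:atrslm}--\eqref{def:rpslm}, observe that both branches lie on the invariant line $y=x$, and conclude that the orbit through $\mathbf{p}_a^-$ never leaves it, so the maximal delay is unbounded. For the way-in/way-out function, however, the paper argues by brute force: it takes the specific entry points $\mathbf{p}_i=(-\rho,-\rho\pm\delta)$, plugs them into the explicit local flows \eqref{eq:theFlows-x}--\eqref{eq:theFlows-y}, and exhibits times of flight $\tau_1,\dots,\tau_4$ showing the exit ordinate lies between $\rho-\delta$ and $\rho$, which gives the $\pm\delta$ band directly. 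You instead introduce the transverse variable $u=x-y$, note $u'=-u$ in $Q_3$, $u'=u$ in $Q_1$ and $|u'|\le|u|$ in the corner quadrants, and balance the contraction time against the expansion time using $y'=\varepsilon$; this is more conceptual, makes the mechanism (equal attraction and repulsion rates) transparent, and in fact yields the sharper statement that $\bigl||y_i|-y_o\bigr|$ is exponentially small in $|y_i|/\varepsilon$, of which the theorem's $\delta$-band is a weakening. The price is the point you yourself flag: the balance is only exact up to the transit through $Q_4$ (or $Q_2$) near the non-hyperbolic corner, and to close the argument you must still invoke the explicit linear flows (as the paper does throughout) to check that this transit time, and hence the change of $|u|$ across it, is exponentially small — e.g.\ the crossing from $y=0$ to $x=0$ takes time $O\bigl(\tfrac{\delta}{\varepsilon}e^{-(\rho-\delta)/\varepsilon}\bigr)$, exactly the paper's $\tau_2$. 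With that verification written out (and the tube radius measured consistently, say as $|x-y|\le\delta$, which is how the paper's entry/exit points are taken), your plan gives a complete proof.
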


\begin{remark}\label{rem:2Vidal}
In \cite{FV12}, the authors address the study of slow passage through the transcritical bifurcation using the differentiable normal form
\begin{equation}
\begin{aligned}
    \dot{x} &= -y x + x^2, \\
    \dot{y} &= \varepsilon.
\end{aligned}
\end{equation}
By choosing this system the authors in \cite{FV12} conclude that the attractive slow manifold connects with the repelling slow manifold. Thus, the situation presented in \cite{FV12} coincides with that stated in Theorem \ref{th:main2}. Although the delay analysis conducted in \cite{FV12} is presented in terms of the time variable, the result obtained is entirely equivalent to that obtained in Theorem \ref{th:main2}.
\end{remark}

\begin{remark}\label{rem:2}
    In  Remark 2.2 in  \cite{krupa2001extending}, the authors assure the existence of a function $\lambda_c(\sqrt{\varepsilon})$ with $\lambda_c(0)=1$ and such that, for $\lambda=\lambda_c(\sqrt{\varepsilon})$, the slow manifold $\mathcal{S}_{a,\varepsilon}^-$ extends to $\mathcal{S}_{r,\varepsilon}^+$ for $\varepsilon$ sufficiently small. From our Theorem \ref{th:main2}, it follows that in the PWL setup, the function $\lambda_c(\sqrt{\varepsilon})$ is identically $1$. Moreover, the authors in  \cite{krupa2001extending} claim that ``for values of $\lambda$ exponentially close to $\lambda_c(\sqrt{\varepsilon})$ the slow manifold $\mathcal{S}_{a,\varepsilon}^-$ can follow $\mathcal{S}_{r,\varepsilon}^+$ over an $O(1)$-distance before being repelled". This last statement motivates the next section.
\end{remark}

\subsection{Slow passage for $\lambda(\varepsilon)$}\label{sec:sec3-2}

In this section we study how the maximal delay $z_d(\lambda, \varepsilon)$ behaves for values of $\lambda$ exponentially close to the degenerate value $\lambda=1$. In particular we explore $\lambda(\varepsilon) = 1 \pm e^{-c/\varepsilon}$. 



\begin{theorem}\label{th:main3} 
Consider System \eqref{eq:transBif}. Let $c$ and $\varepsilon$ be positive constants and $\varepsilon$ small enough. 
\begin{itemize}
\item [a)] If $\lambda=1-e^{-\frac c{\varepsilon}}$, then for $\tau=\tau_1+\tau_2$ where $\tau_1=e^{-\frac c{\varepsilon}}+O\left(e^{-\frac {2c}{\varepsilon}}\right)$ and $\tau_2=\frac {c}{\varepsilon} + \ln(\frac {c}{2\varepsilon})$, it follows that $\varphi(\tau;\mathbf{p}_a^-)=(0+X(\varepsilon),c+Y(\varepsilon))^T$ where 
$X(\varepsilon)= \varepsilon \ln\left(\frac c {2\varepsilon} \right)+O(\varepsilon e^{-\frac c{\varepsilon}})$ and $Y(\varepsilon)=\varepsilon \ln\left(\frac c {2\varepsilon} \right) +O\left(e^{-\frac {c}{\varepsilon}}\right)$.
\item [b)] If $\lambda=1+e^{-\frac c{\varepsilon}}$, then for $\tau=\tau_1+\tau_2$ where $\tau_1=e^{-\frac c{\varepsilon}}$ and $\tau_2=\frac {c}{\varepsilon} + \ln(\frac {c}{2\varepsilon})$, it follows that $\varphi(\tau;\mathbf{p}_a^-)=(2c+X(\varepsilon),c+Y(\varepsilon))^T$ where 
$X(\varepsilon)= \varepsilon \ln\left(\frac c {2\varepsilon} \right)+O(\varepsilon e^{-\frac c{\varepsilon}})$ and $Y(\varepsilon)=\varepsilon \ln\left(\frac c {2\varepsilon} \right)$. 
\end{itemize}
\end{theorem}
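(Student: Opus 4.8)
Both items are proven by integrating the flow of \eqref{eq:transBif} — which is linear on each open quadrant — along the two quadrants that the orbit through $\mathbf{p}_a^-$ successively visits, and then expanding the result asymptotically in the exponentially small quantity $\mu:=e^{-c/\varepsilon}$. First I would evaluate \eqref{eq:pa-} at the two prescribed values of $\lambda$: for $\lambda=1-\mu<1$ this gives $\mathbf{p}_a^-=(-\varepsilon\mu,0)$, while for $\lambda=1+\mu>1$ it gives $\mathbf{p}_a^-=(0,-\varepsilon\mu)$. In either case the vector field at $\mathbf{p}_a^-$ has both components equal to $\varepsilon>0$, so the orbit enters the quadrant $Q_2=\{x<0,\,y>0\}$ in case (a) and the quadrant $Q_4=\{x>0,\,y<0\}$ in case (b); since moreover $y'=\varepsilon>0$ everywhere, $y$ is strictly increasing and the only further quadrant the orbit can reach is $Q_1=\{x>0,\,y>0\}$, which contains the repelling slow manifold $\mathcal{S}_{r,\varepsilon}^{+}$ of \eqref{def:rpslm}.

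\emph{First passage.} Using the explicit local flow (Eqs.~\eqref{eq:theFlows-x}--\eqref{eq:theFlows-y}) on $Q_2$ (case (a)), resp.\ on $Q_4$ (case (b)), I would determine the time $\tau_1$ at which the orbit meets the boundary with $Q_1$ — the line $x=0$ in case (a), the line $y=0$ in case (b) — together with the crossing point $\mathbf{q}:=\varphi(\tau_1;\mathbf{p}_a^-)$. In case (b) the slow coordinate reads $y(t)=-\varepsilon\mu+\varepsilon t$, whence $\tau_1=\mu$ exactly and the $x$-coordinate of $\mathbf{q}$ is $\varepsilon(2e^{\mu}-\mu-2)=O(\varepsilon\mu)$. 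In case (a) the exit condition $x(\tau_1)=0$ reduces to a transcendental equation of the form $2-2e^{-\tau_1}-\tau_1=\mu$; since its left-hand side behaves like $\tau_1-\tau_1^2+O(\tau_1^3)$ for small $\tau_1$, it has a unique small positive root $\tau_1=\mu+O(\mu^2)$, and the $y$-coordinate of $\mathbf{q}$ is $\varepsilon\tau_1=O(\varepsilon\mu)$. Monotonicity of the fast coordinate on this short interval shows the orbit indeed stays in the stated quadrant on $[0,\tau_1]$.

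\emph{Second passage and evaluation.} On $Q_1$ the change of variable $v:=x-y$ decouples the system: $v'=v+(\lambda-1)\varepsilon$, whose equilibrium $v^{*}=-(\lambda-1)\varepsilon$ is exactly the $v$-value of $\mathcal{S}_{r,\varepsilon}^{+}$. Hence $\tilde v:=v-v^{*}$ satisfies $\tilde v'=\tilde v$, so $\tilde v(s)=\tilde v(0)\,e^{s}$, while $y(s)=y(\tau_1)+\varepsilon s$ and $x(s)=\tilde v(s)+v^{*}+y(s)$; the orbit's initial distance to the slow manifold is exponentially small, $\tilde v(0)=2(\lambda-1)\varepsilon\bigl(1+O(\mu)\bigr)$, so the orbit shadows $\mathcal{S}_{r,\varepsilon}^{+}$ — this is the canard segment — for a time of order $c/\varepsilon$. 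I would then evaluate at $s=\tau_2=\tfrac c\varepsilon+\ln\bigl(\tfrac c{2\varepsilon}\bigr)$: using $e^{\tau_2}=\tfrac c{2\varepsilon}\,e^{c/\varepsilon}$ and $(\lambda-1)e^{c/\varepsilon}=\mp 1$, the amplified offset telescopes to $\tilde v(0)e^{\tau_2}=\mp c\bigl(1+O(\mu)\bigr)$. Substituting back, together with $\varepsilon\tau_2=c+\varepsilon\ln(c/2\varepsilon)$ and $v^{*}=-(\lambda-1)\varepsilon$, yields $\varphi(\tau_1+\tau_2;\mathbf{p}_a^-)=(X(\varepsilon),\,c+Y(\varepsilon))$ in case (a) and $(2c+X(\varepsilon),\,c+Y(\varepsilon))$ in case (b), with the stated expansions of $X$ and $Y$. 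A final monotonicity check — the graph of $x(s)$ on $[0,\tau_2]$ first increases and then decreases but is still positive at $s=\tau_2$ because $c/(2\varepsilon)>1$ for $\varepsilon$ small — confirms the orbit stays in $Q_1$ throughout the second passage, so the computation is valid. Since $y\approx c$ at the exit point, this also yields $\lim_{\varepsilon\searrow0}z_d(\lambda,\varepsilon)=c$.

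\emph{Main difficulty.} The delicate step is the final evaluation: $\tilde v(0)$ is exponentially small and gets multiplied by the exponentially large factor $e^{\tau_2}\sim\varepsilon^{-1}e^{c/\varepsilon}$, so the entry data $\mathbf{q}$ must be controlled to sufficient \emph{relative} precision — one must carry the first-passage expansion to second order in $\mu=e^{-c/\varepsilon}$ and check that the cross-terms produced in this way are cancelled or can be absorbed into the claimed error estimates. Everything else reduces to solving scalar linear ODEs explicitly and carrying out routine asymptotic expansions.
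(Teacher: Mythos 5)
Your proposal is correct and takes essentially the same route as the paper: you compute the first passage with the explicit linear flow to get $\tau_1$ and an entry point exponentially close to the axis, and then evaluate the flow in $Q_1=\{x>0,y>0\}$ at time $\tau_2$; your decoupling variable $v=x-y$ is exactly the coefficient of $e^{t}$ in \eqref{eq:theFlows-x}, so the ``shadowing of $\mathcal{S}_{r,\varepsilon}^{+}$'' computation coincides with the paper's direct evaluation, and your error bookkeeping matches the stated expansions of $X(\varepsilon)$ and $Y(\varepsilon)$. One notational caveat: the paper labels quadrants \emph{clockwise} (its $Q_2$ is $\{x>0,y<0\}$ and its $Q_4$ is $\{x<0,y>0\}$), whereas you use the standard counterclockwise labels, so when invoking \eqref{eq:theFlows-x} you must take the formula attached to the region $\{x<0,y>0\}$ in case (a) and to $\{x>0,y<0\}$ in case (b) --- which is in fact what your explicit computations (the equation $2-2e^{-\tau_1}-\tau_1=\mu$ and the value $\varepsilon(2e^{\mu}-\mu-2)$) already do.
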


\begin{remark}\label{rm:exp-delays}
As a consequence of Theorem~\ref{th:main3}, when $\lambda$ is $O(e^{-\frac{c}{\varepsilon}})$ close to $1$, the maximal delay satisfies $z_d(\lambda,\varepsilon)=c+h(\varepsilon)$ and $\lim_{\varepsilon \searrow 0}z_d(\lambda(\varepsilon),\varepsilon) = c$ (see Fig.~\ref{fig:losDelays}). 
\end{remark}

\begin{figure}[ht]
\begin{center}
\includegraphics[scale=0.5]{./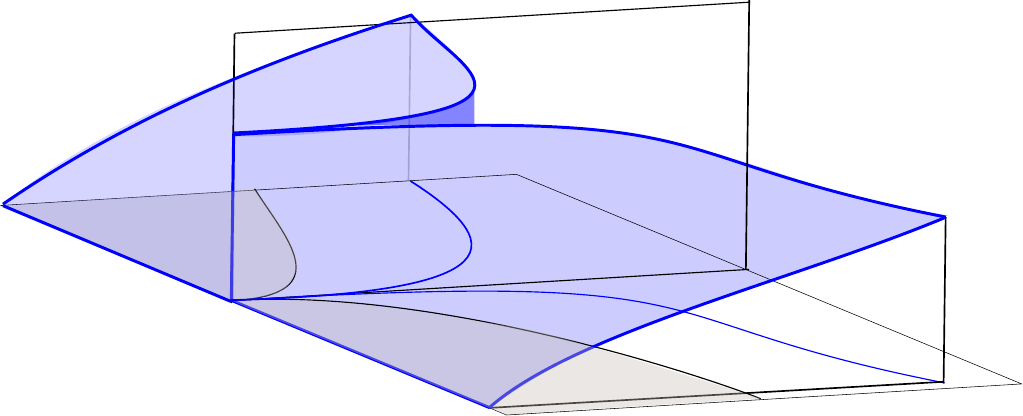}
\begin{picture}(0,0)
\put(-220,20){$\lambda=1$}
\put(-202,65){$c$}
\put(-60,60){$z_d(\lambda(\varepsilon),\varepsilon)\approx c$}
\put(-77,8){$\varepsilon_0(\lambda)$}
\put(-72,22){$\lambda(\varepsilon)$}
\end{picture}
\end{center}
\caption{\textbf{Sketch of the maximal delay function $z_d$ over the $(\lambda,\varepsilon)$-plane}.  The shadowed region corresponds to $\varepsilon<\varepsilon_0(\lambda)$ in Remark~\ref{rem:casoc} and shows that for fixed $\lambda \neq 1$, $\lim_{\varepsilon \searrow 0}z_d(\lambda,\varepsilon) = 0$. The blue curve on the domain corresponds to the parameter curve $\lambda(\varepsilon)=1\pm e^{-\frac{c}{\varepsilon}}$ showing that  $\lim_{\varepsilon \searrow 0}z_d(\lambda(\varepsilon),\varepsilon) = c$. }\label{fig:losDelays}
\end{figure}



\section{Canard explosion and enhanced delay}\label{sec:canard-explosion}

In this section we study an application of the slow passage through a transcritical bifurcation inspired by the work by \cite{FPV08,FV12} in the smooth context. To that aim, we consider the two parameter $(\lambda,\varepsilon)$ PWL system given by
\begin{equation}\label{sys:2transc}
\left\{
\begin{array}{ll}
\left\{
 \begin{array}{l}
 \dot{x}=|x+1/2|-|y-1/2|+\lambda \varepsilon,\\
 \dot{y}=\varepsilon (y-x),
 \end{array}
\right. & y-x\geq 0,\\ \\ 
\left\{
 \begin{array}{l}
 \dot{x}=-|x-1/2|+|y+1/2|-\lambda \varepsilon,\\
 \dot{y}=\varepsilon (y-x),
 \end{array}
\right. & y-x\leq 0.
\end{array}
\right.
\end{equation}

As one can see, System \eqref{sys:2transc} consists in two different passages through transcritical bifurcations which are locally identical to the one we have previously analyzed in Section~\ref{sec:mainrsults}. Indeed, the two transcritical bifurcations are located at $\mathbf{p}_-=(-1/2,1/2)$ and $\mathbf{p}_+=(1/2,-1/2)$ which are on the straight lines $y-x = 1$ and $y-x = -1$, respectively. Therefore, as long as the trajectories remain close to these lines, System \eqref{sys:2transc} locally shows comparable dynamics as the one generated by the System \eqref{eq:transBif}. Consequently, we consider each passage separately and then we use the local dynamics to describe the global flow. As the relationship between the single units and the coupled system has not been formally established, from now on we depart from presenting the results in the form of theorems and instead compile them into remarks, which are validated by numerical simulations.

System  \eqref{sys:2transc}  is formed by two copies of the previously studied System \eqref{eq:transBif}, with a common boundary at the line $y-x=0$. Even if for $\varepsilon=0$ the vector field of System \eqref{sys:2transc} is continuous, that is nevermore the case when $\varepsilon>0$ because of the constant terms $\pm \lambda \varepsilon$ appearing in both subsystems. Because of this discontinuity, we follow Filippov's convention \cite{F13,GST11} to define the vector field over the line $y=x$ and hence, for $\varepsilon>0$ there exists a sliding segment $S_l$ along the switching line $y=x$. This sliding segment is limited by the end points  $\mathbf{e}_-= (-\frac {\lambda \varepsilon}{2} ,-\frac {\lambda \varepsilon}{2} )$ and $\mathbf{e}_+=(\frac {\lambda \varepsilon}{2} ,\frac {\lambda \varepsilon}{2} )$, corresponding to boundary equilibrium points of System \eqref{sys:2transc} in $y-x\geq 0$ and $y-x\leq 0$, respectively. Therefore, for every point of the phase plane but the sliding segment, there exists a unique orbit passing through it \cite{GST11}. We also note that as System \eqref{sys:2transc} is invariant under the change of variables $(x,y)\to (-x,-y)$, the orbits are symmetric with respect to the origin.

\begin{figure}[ht]
\begin{center}
\includegraphics[scale=0.5]{./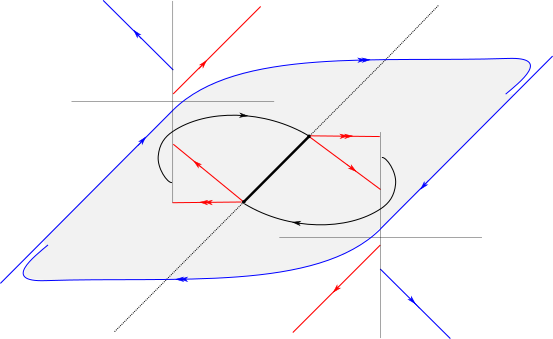}
\begin{picture}(0,0)
\put(-200,70){$\mathcal{S}_a^-$}
\put(-145,125){$\mathcal{S}_r^-$}
\put(-126,70){$S_l$}
\put(-130,45){$\mathbf{e}_-$}
\put(-110,90){$\mathbf{e}_+$}
\put(-91,54){$\gamma_-$}
\put(-145,80){$\gamma_+$}
\put(-168,97){\footnotesize$\mathbf{p}_-$}
\put(-156,95){\circle*{3}}
\put(-68,43){\footnotesize$\mathbf{p}_+$}
\put(-72.5,41){\circle*{3}}
\put(-47,60){$\mathcal{S}_a^+$}
\put(-90,12){$\mathcal{S}_r^+$}
\put(-50,100){$\mathcal{R}$}
\put(-75,116){\rotatebox{45}{$y=x$}}
\end{picture}
\end{center}
\caption{\textbf{Phase plane of System \eqref{sys:2transc}} containing the following elements: (i) the attracting slow manifolds $\mathcal{S}_a^{\pm}$ and the repelling slow manifolds $\mathcal{S}_r^{\pm}$ associated with the transcritical bifurcation at points $\mathbf{p}_-=(-\frac 1 2,  \frac 1 2)$ and $\mathbf{p}_+=(\frac 1 2, -\frac 1 2)$; (ii) the positive invariant region $\mathcal{R}$ (dashed area) which is  limited by the attracting slow manifolds and contains the sliding segment $S_l$; (iii) The boundary equilibrium points $\mathbf{e}_+$ and $\mathbf{e}_-$ which are of node type, together with the fast eigenspace and the slow eigenspace (indicated with two and one arrows, respectively).}\label{fig:applicat1}
\end{figure}  

The local analysis around the transcritical bifurcation (see Theorem~\ref{th:main1delay}(a)), ensures that when $\lambda>1$ the attracting slow manifold $\mathcal{S}_a^-$ of the subsystem located in $y-x>0$, leaves the neighbourhood of the repelling branch $\mathcal{S}_r^-$ at a distance $O(\varepsilon |\ln(\varepsilon)|)$ above of the bifurcation point $\mathbf{p}_- $. Then, because of the symmetry, the orbit gets a neighbourhood of the attracting branch $\mathcal{S}_a^+$ of the subsystem in $y-x<0$, passes through the bifurcation point $\mathbf{p}_+$, follows the repelling branch $\mathcal{S}_r^+$, and leaves it at a distance $O(\varepsilon |\ln(\varepsilon)|)$ below the point $\mathbf{p}_+$. This behaviour gives rise to a positive invariant region $\mathcal{R}$ containing the sliding segment $S_l$, see Figure~\ref{fig:applicat1}.


Let  $\gamma_-$ be the orbit locally contained in $y-x<0$ with $\mathbf{e}_-$ as $\omega-$limit set (resp. $\gamma_+$ the orbit locally contained in $y-x>0$ with $\mathbf{e}_+$ as $\omega-$limit set). We now discuss the behaviour of the orbits inside $\mathcal{R}$ depending on the parameters $\lambda, \varepsilon$. For $\lambda, \varepsilon$ values such that $\mathbf{e}_+$ is contained in the $\alpha-$limit set of $\gamma_-$, then by the symmetry of the flow it follows that $\mathbf{e}_-\in \alpha(\gamma_+)$. In consequence, there is a heteroclinic connection $\Gamma_h=\gamma_- \cup \gamma_+$ inside $\mathcal{R}$ and containing the sliding segment $S_l$. Otherwise, the sliding segment $S_l$ is a global attractor for the flow in $\mathcal{R}$, see Figure~\ref{fig:applicat1}. 

Let us now discuss about the stability of the  heteroclinic connection $\Gamma_h$. First, we note that $\mathbf{e}_-$ and $\mathbf{e}_+$ are locally repellor equilibrium points of node type, both having one fast eigenspace and one slow eigenspace, see Figure \ref{fig:applicat1}. For the critical values of the parameters $\lambda, \varepsilon$ for which $\gamma_+$ contains the fast eigenspace of $\mathbf{e}_-$ the heteroclinic cycle $\Gamma_h$ appears and it is outside stable and inside unstable. Conversely, when $\gamma_-$ does not contain the fast eigenspace of $\mathbf{e}_+$ then $\Gamma_h$ is unstable both inside and outside. Specifically, the $\alpha-$limit set of every orbit in a neighborhood of $\Gamma_h$ is $\{\mathbf{e}_+,\mathbf{e}_-\}$. Therefore, since $\mathcal{R}$ is a positive invariant region and $\{\mathbf{e}_+,\mathbf{e}_-\}$ are the $\alpha$-limit set of any orbit in a neighborhood containing $\Gamma_h$, from the Poincaré-Bendixon Theorem we conclude that a stable limit cycle $\Gamma$  surrounding $\Gamma_h$ appears in region $\mathcal{R}$. 

The limit cycle $\Gamma$ borns with an amplitude $A$ equal to the amplitude of $\Gamma_h$, that is, $A=O(\|\mathbf{e}_+-\mathbf{e}_-\|)=O(\varepsilon\lambda)$. Furthermore, as $\varepsilon$ decreases to zero, the limit cycle approaches the attracting slow manifold $S_{a}^-$ and hence $\Gamma$ tends to the boundary of region $\mathcal{R}$. Therefore, when $\lambda$ is fixed and greater that 1, the attracting branch of the slow manifold $\mathcal{S}_{a}^-$ in $x-y<0$, leaves the neighbourhood of the repelling branch $\mathcal{S}_r^-$ at a distance $z_d(\lambda,\varepsilon)=\varepsilon|\ln(\varepsilon)|$ from the bifurcation point $\mathbf{p}_-$. Therefore, the amplitude of $\Gamma$ is $A=1+O(\varepsilon |\ln(\varepsilon)|)$. Furthermore, if we take $\lambda =1+e^{-\frac c{\varepsilon}}$ the attracting branch $\mathcal{S}_{a}^-$ follows for a time the repelling branch $\mathcal{S}_r^-$, giving rise to a canard segment. Then $\mathcal{S}_{a}^-$ leaves the neighbourhood of $\mathcal{S}_r^-$ at a distance $z_d(\lambda, \varepsilon)=c+\varepsilon h(\varepsilon)$ from the bifurcation point, where $\varepsilon h(\varepsilon)$ tends to zero as $\varepsilon \to 0$ (see Theorem~\ref{th:main3}(b)). Consequently, $\Gamma$ is a (headless) canard cycle with amplitude 
\begin{equation}\label{exp:Avsc}
A=1+2c+2\varepsilon h(\varepsilon) =1-2\varepsilon\ln(\lambda-1)+2\varepsilon h(\varepsilon) ,
\end{equation} 
where we used $c=-\varepsilon\ln(\lambda-1)$. In Figure~\ref{fig:elCanard} we draw the amplitude in \eqref{exp:Avsc} for the canard explosion taking place at $\lambda=1$ for $\varepsilon=0.02$ and $\varepsilon=0.05$. \\

Expression \eqref{exp:Avsc} provides an explicit relation between the amplitude $A$ of the canard cycle and the value of the parameter $\lambda$ at which that canard cycle exists. Taking the derivative of \eqref{exp:Avsc} with respect to $\lambda$ and isolating $c$ in \eqref{exp:Avsc} in terms of $A$, we can calculate the range of variation of the amplitude with respect to the parameter 
\[
\frac {dA}{d\lambda} =-2\varepsilon e^{\frac {A-1}{2\varepsilon}-h(\varepsilon)},
\]
which is exponentially big.

\begin{figure}[b]
    \centering\includegraphics[scale=0.6]{./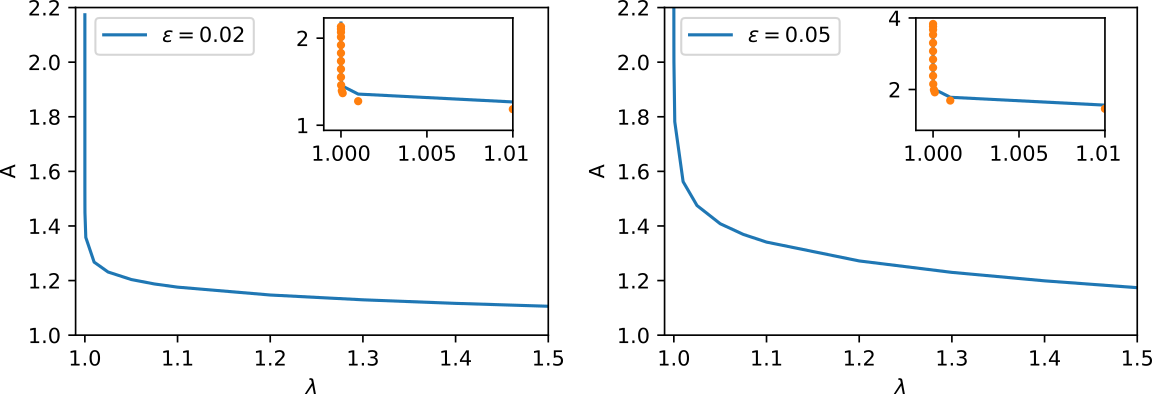}
    \caption{\textbf{Canard explosion occurring in System \eqref{sys:2transc}}. In this figure we represent the amplitude of the canard cycle versus the parameter $\lambda$ for $\varepsilon=0.02$ (left) and $\varepsilon=0.05$ (right). In both panels we draw an inset figure showing the computed amplitude $A$ of the canard explosion  (blue line) and the theoretical prediction, see Equation \eqref{exp:Avsc} (orange dots).}
    \label{fig:elCanard}
\end{figure}

\begin{remark}\label{rem:canardexpl}
We conclude that System \eqref{sys:2transc} presents a two parameter family of stable limit cycles $\Gamma_{\varepsilon,\lambda}$ for $\varepsilon>0$ and $\lambda>1$. Let $A(\varepsilon,\lambda)$ denote the amplitude of $\Gamma_{\varepsilon,\lambda}$.

Fixed $\varepsilon_0$ positive and small enough, the family of limit cycles appears for $\lambda_b=O({\varepsilon}_0^{-1})$ at a bifurcation at which the sliding segment $S_l$ changes its stability. The amplitude at the bifurcation is $A(\varepsilon_0,\lambda_b)=O(1)$. As $\lambda$ decreases to $1$ the amplitude of the limit cycles increases and the family overcomes a canard explosion, see Figure~\ref{fig:elCanard}. At the canard explosion, from \eqref{exp:Avsc}, the amplitude of the canard cycles can be accurately approximated by $A(\varepsilon_0,\lambda)=1-2\varepsilon_0 \ln (\lambda-1)$ (see the orange points in the inset panels of Figure~\ref{fig:elCanard}). From the previous expression of the amplitude we obtain the partial derivative $\frac {\partial A}{\partial \lambda}(\varepsilon_0,\lambda)=-\frac {2\varepsilon_0}{\lambda-1}$. Hence, for $\lambda$ exponentially close to $1$, that is $\lambda=1+e^{-\frac c {\varepsilon_0}}$, it follows that the amplitude of the canard cycle remains constant $A(\varepsilon_0,\lambda)=1+2c$. Finally, its derivative $\frac {\partial A}{\partial \lambda}(\varepsilon_0,\lambda)=-2\varepsilon e^{\frac c {\varepsilon}}$ is exponentially big. 

On the other side, fixed $\lambda_0>1$, as $\varepsilon$ decreases to zero, the amplitude of the limit cycles satisfies $A(\varepsilon,\lambda_0)=1+O(\varepsilon \ln(\varepsilon))$ where no dependence on $\lambda_0$ appears. The accuracy of this approximation is represented in Figure~\ref{fig:implosion}. 
\end{remark}

\begin{figure}[ht]
\centering
\includegraphics[scale=0.3]{./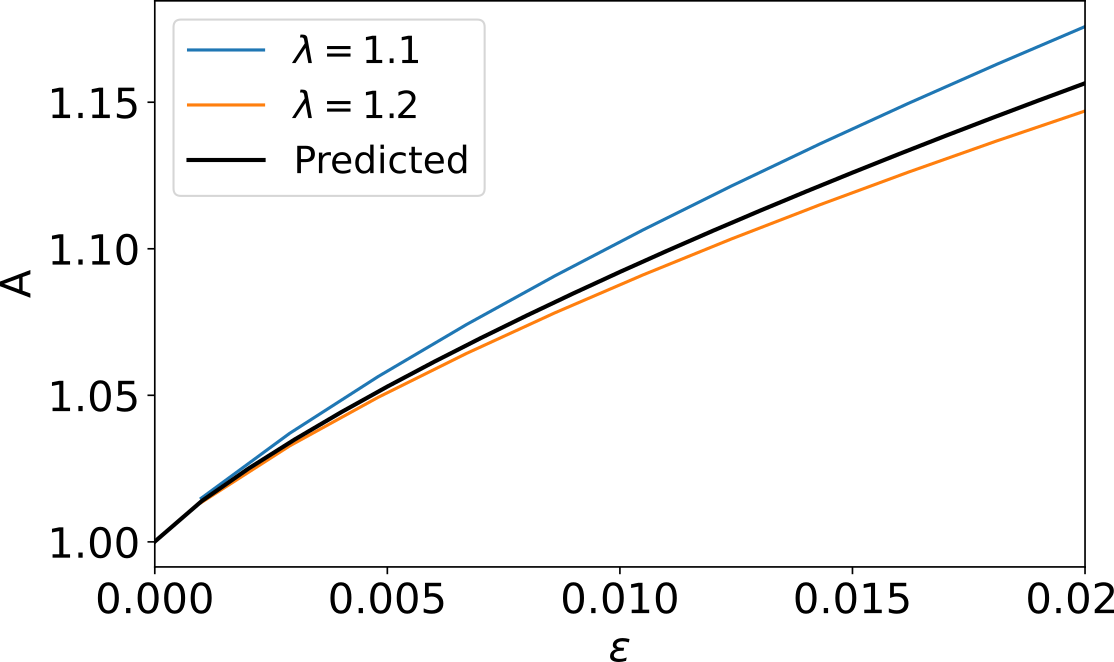}
\caption{\textbf{Amplitude of the cycles for fixed $\lambda$.} In this figure we show the validity of the theoretical approximation for the amplitude given by $A=1+2\varepsilon|\ln(\varepsilon)|$ for two fixed values of $\lambda$.} \label{fig:implosion}
\end{figure}

When $\lambda=1$, as a consequence of Theorem~\ref{th:main2}, it follows that the attracting slow manifold $\mathcal{S}_{a}^-$ (resp. $\mathcal{S}_{a}^+$) connects with the repelling slow manifold $\mathcal{S}_{r}^-$ (resp. $\mathcal{S}_{r}^+$) at the transcritical bifurcation point located at $\mathbf{p}_-=(-\frac 1 2, \frac 1 2)$ (resp. $\mathbf{p}_+=(\frac 1 2, -\frac 1 2)$ ).
Therefore, the previous mentioned invariant region $\mathcal{R}$ becomes an unbounded invariant strip in the phase plane given by $ -1 \leq y-x \leq  1$, that is, the region laterally limited by the union of the slow manifolds $\mathcal{S}_{a}^+\cup\mathcal{S}_{r}^+$ (and also $\mathcal{S}_{a}^-\cup\mathcal{S}_{r}^-$). Moreover, since the amplitude of canard cycles tends to infinity as $\lambda$ tends to $1^+$, we conclude that the canard explosion ends at a heteroclinic loop with the equilibrium points at infinity and with heteroclinic orbits given by  $\mathcal{S}_{a}^+\cup\mathbf{p}_+\cup\mathcal{S}_{r}^+$ and $\mathcal{S}_{a}^-\cup\mathbf{p}_-\cup\mathcal{S}_{r}^-$.

As we now discuss, our understanding of System \eqref{eq:transBif} allows us to provide theoretical predictions for the increase in each passage of the amplitude of the oscillatory behaviour existing in the invariant strip $\mathcal{R}$. Consider small amplitude tubular neighbourhoods along these four invariant manifolds and let us describe the dynamical behaviour of the orbits with respect to these tubular neighbourhoods. Consider an orbit which enters the tubular neighbourhood around $\mathcal{S}_{a}^-$ at a point with ordinate $y_0<\frac 1 2$. Then, Theorem~\ref{th:main2} ensures that the orbit suffers a delay of magnitude $\frac 1 2 - y_0$ after passing the transcritical bifurcation point, and leaves the neighbourhood of the repelling slow manifold $\mathcal{S}_{r}^-$ at a point with ordinate $1-y_0$. In a similar way, when an orbit gets inside the tubular neighbourhood around $\mathcal{S}_{a}^+$ with ordinate $y_0$, it suffers a delay $\frac 1 2+y_0$ and leaves the neighbourhood of $\mathcal{S}_{r}^-$ with ordinate $-1-y_0$. In particular, for $\varepsilon$ small enough, if an orbit approaches $\mathcal{S}_{a}^-$ with ordinate $y_0$, then it suffers  a first delay and leaves $\mathcal{S}_{r}^-$ at $y_1=1-y_0$, then approaching $\mathcal{S}_{a}^+$ with the same ordinate and, after a second delay, leaving $\mathcal{S}_{r}^+$ with ordinate $y_2=-1-y_1$, and then approaching again $\mathcal{S}_{a}^-$ but now with ordinate $y_2 = -2 + y_0$. In this case, the orbit suffers a delay which is $2$ units greater than the first delay. This phenomenon is called enhanced delay \cite{FPV08}. Finally, the orbit leaves the unstable slow manifold $\mathcal{S}_{r}^-$ with ordinate $y_3=1-y_2=3-y_0$, and so on. 

\begin{figure}[h]
    \centering\includegraphics[width=1\textwidth]{./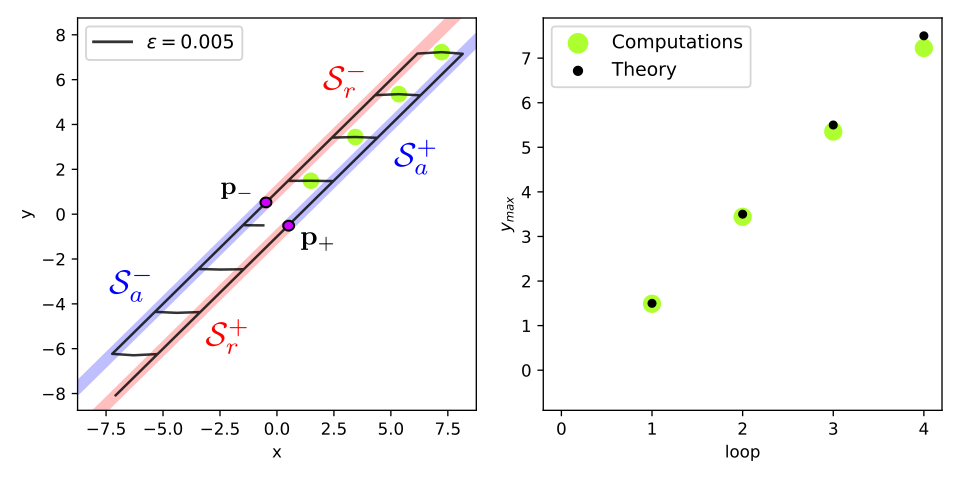}
    \caption{\textbf{Enhanced delay}. In this figure we illustrate the phenomenon of enhanced delay for System \eqref{sys:2transc}. In the left panel, together with both the bifurcation points and the singular manifolds sketched in Fig.~\ref{fig:applicat1}, we start a trajectory at $(x_0, y_0) = (-0.6, -0.5)$ and integrate numerically System \eqref{sys:2transc} for a time $t\approx14001.12$ (in which it completes 4 complete loops). We observe how the maximum value for the $y$ component (green dots) increases every loop thus showing the validity of the theoretical predictions in Remark~\ref{rm:enhDelay}. Indeed we computed $y_{\max} = \{1.49, 3.44, 5.35, 7.23 \}$ (to be compared with $\{1.5, 3.5, 5.5, 7.5 \}$). The accuracy of the theoretical prediction will increase as $\varepsilon \to 0$, however the numerical accuracy required checking this scenario is beyond our software.} 
    \label{fig:enhDelay}
\end{figure}

\begin{remark}\label{rm:enhDelay}
We conclude that, for sufficiently small $\varepsilon$, when an orbit approaches the attracting manifold with ordinate $y_0$, at each passage through one transcritical bifurcation it undergoes an enhanced delay of magnitude equal 2 units, giving raise to an oscillatory dynamical behaviour of increasing amplitude (see Fig.~\ref{fig:enhDelay} left panel), and whose peaks and valleys follow the sequences $\{2k-1-y_0\}_{k=1}^{\infty}$ , $\{-2k+y_0\}_{k=1}^{\infty}$, respectively (Fig.~\ref{fig:enhDelay} right panel).  
\end{remark}

\section{Conclusions}\label{sec:conclusions}

In this paper, we explore the slow passage phenomenon through a transcritical bifurcation in a minimal piecewise-linear (PWL) differential system.
Our study follows from previous studies in the smooth context. While Krupa and Szmolyan studied in \cite{krupa2001extending} the case in which the distance between the attracting and repelling slow manifolds is $O(\varepsilon)$, Françoise, Piquet and Vidal in \cite{FPV08}, and Vidal and Françoise in \cite{FV12} addressed the scenario where both manifolds connect. Besides considering these situations in the PWL context, we also tackled the case in which the manifolds are exponentially close, which can be regarded as the regime connecting both scenarios.
 
Our study benefits from the PWL framework as it allows to obtain explicit expressions of the slow manifolds and of the flow in their vicinity. Having such control enables explicit calculations of relevant system features, in contrast to the challenges posed by smooth systems. Specifically,  we ascertain the maximum delay $z_{d}(\lambda,\varepsilon)$ as a function of the system parameters  hence identifying three different behaviours which we analyse in Section \ref{sec:mainrsults}.  First, we delineate the parameter space region where the delay tends trivially to zero with $\varepsilon$, as indicated by Theorem \ref{th:main1delay}, Remark \ref{rem:zdcero}, and Remark \ref{rem:casoc}. These results are qualitatively (and almost quantitatively) equivalent to the ones described in \cite{krupa2001extending}  who considered a fixed value of $\lambda \neq 1$. Furthermore, we pinpoint the curve where the delay exhibits singular (unbounded) behavior, as outlined in Theorem \ref{th:main2}. Notably, in this configuration, the dynamics of System \eqref{eq:transBif} around the slow manifold resemble that observed in the smooth context in \cite{FV12}, as discussed in Remark \ref{rem:2Vidal}. Lastly, we identify the parameter region where the maximum delay exhibits transitional behavior, converging to a finite positive value with $\varepsilon$, as demonstrated in Theorem \ref{th:main3} and Remark \ref{rm:exp-delays}.

Inspired by the study on enhanced delay conducted in \cite{FV12}, in Section 4 we introduce a differentiable system with central symmetry and composed by two PWL systems exhibiting a transcritical bifurcation. Locally, in the vicinity of the critical manifolds, the flow of each of these systems  can be approximated by the behavior of the flow analyzed in Section 3. This approximation enables us to describe the global flow of the system approximately. Consequently, we find that System \eqref{sys:2transc} displays an unbounded canard explosion, that is, the amplitude of the canard cycles tends to infinity at a critical value. In fact, at this critical value the flow exhibits the phenomenon of enhanced delay described in \cite{FV12}. Again, the PWL framework allows us to explicitly quantify some of the critical quantities associated with the enhanced delay phenomenon. Specifically, the bifurcation point where limit cycles originate, the relationship between their amplitudes and the parameters (both in canard and non-canard regimes, see Remark \ref{rem:canardexpl}), and the increasing in the amplitude of each oscillation occurring in the enhanced delay phenomenon, see Remark \ref{rm:enhDelay}. We emphasize that all the obtained results on enhanced delay are consistent with those presented in \cite{FPV08, FV12} in the smooth context.

The results in this manuscript can be extended in different ways. In \cite{FV12}, the authors address the question of the structural stability of the enhanced delay. Indeed, introducing a perturbative term $\alpha n(t)$ to the first equation of System \eqref{eq:transBif} can be interpreted as perturbing the parameter $\lambda$ in the form $\left(\lambda+\frac{\alpha}{\varepsilon} n(t)\right)$. Thus, in the singular case $\lambda=1$, System \eqref{eq:transBif} transitions from the situation described in Theorem \ref{th:main1}(a) to that described in Theorem \ref{th:main1}(b) and vice versa. This suggests that even in the PWL case, the enhanced delay is a structurally unstable phenomenon. However, addressing properly this question is beyond the scope of this manuscript and appears as an interesting topic for future work. Additionally, and following  further results by Krupa and Szmolyan in \cite{krupa2001extending}, studying a minimal PWL system exhibiting the slow passage phenomenon through a pitchfork bifurcation seems also a natural way of extending the ideas and techniques outlined in this manuscript.



\section{Proof of the main results}\label{sec:paper-proofs}
In this section, we provide the proof of the main results of our study. In these proofs, the local expression of the solution $\varphi(t;(x_0,y_0),\lambda,\varepsilon)$ of System \eqref{eq:transBif} given in \eqref{eq:theFlows-x}-\eqref{eq:theFlows-y} plays a central role.
However, throughout this section, we omit the dependence of $\varphi$ in $\lambda, \varepsilon$ to maintain notation as simpler as possible.

\subsection{Proof of \Cref{th:main1}}

In the next lemma we describe the evolution of the orbit passing through the point 
\begin{equation}\label{eq:pt}
\mathbf{p}_t=(0,\varepsilon \lambda),
\end{equation}
where the flow of System \eqref{eq:transBif} is tangent to the line $\{x=0\}$. Hence, as we geometrically show in \Cref{fig:lm1.smconnection}(a) and next prove, this orbit delimits the set of orbits that intersect the quadrant $Q_1$ from that which do not.

\begin{lemma}\label{lem:pt_behav}
Let $\varphi(t;\mathbf{p}_t)$ be the solution of System~\eqref{eq:transBif} with initial condition $\mathbf{p}_t$.
\begin{itemize}
\item [a)] For $t\geq -\lambda$, the solution $\varphi(t;\mathbf{p}_t)$ is contained in $Q_4$, and $\varphi(-\lambda;\mathbf{p}_t)=(\varepsilon(\lambda-1)-\varepsilon (e^{\lambda}-2),0)^T$.
\item [b)] For $\tau_t = \frac {\rho}{\varepsilon}+1$ then, $d(\varphi(\tau_t;\mathbf{p}_t), {\mathcal{S}}_{a,\varepsilon}^{+} \cap \dout_a) = O(e^{-\frac {\rho}{\varepsilon}})$.
\end{itemize}
\end{lemma}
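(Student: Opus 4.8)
The plan is to obtain both statements by direct computation with the explicit affine flow of \eqref{eq:transBif} inside a single quadrant, namely the quadrant $Q_4$ where $|x|=-x$ and $|y|=y$. First I would argue that the orbit through $\mathbf{p}_t=(0,\varepsilon\lambda)$ actually lives in $Q_4$ near $\mathbf{p}_t$: differentiating $x'=|x|-|y|+\lambda\varepsilon$ along the flow gives $x''=-\varepsilon<0$ at $\mathbf{p}_t$ (the value agrees whether computed from $Q_1$ or from $Q_4$, since the field of \eqref{eq:transBif} is continuous across $\{x=0\}$), while $x'=0$ there. Hence $t\mapsto x(t)$ has a strict local maximum equal to $0$ at $t=0$, so the orbit leaves the $y$-axis into $Q_4$ both for $t$ slightly positive and slightly negative. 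Integrating the $Q_4$ system $x'=-x-y+\varepsilon\lambda$, $y'=\varepsilon$ with $\varphi(0;\mathbf{p}_t)=\mathbf{p}_t$ yields
\[
\varphi(t;\mathbf{p}_t)=\bigl(\varepsilon(1-t-e^{-t}),\ \varepsilon(\lambda+t)\bigr),
\]
which is \eqref{eq:theFlows-x}-\eqref{eq:theFlows-y} specialised to this initial condition.

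For part (a) I would verify that this closed form keeps the trajectory in $\overline{Q_4}$ exactly for $t\ge-\lambda$. Writing $x(t)=\varepsilon\,g(t)$ with $g(t)=1-t-e^{-t}$, one has $g(0)=g'(0)=0$ and $g''(t)=-e^{-t}<0$, so $g(t)\le 0$ for every $t$ (with equality only at $t=0$); thus $x(t)\le 0$ always. Moreover $y(t)=\varepsilon(\lambda+t)\ge 0$ precisely for $t\ge-\lambda$. Therefore, on $[-\lambda,\infty)$ the trajectory stays in $\{x\le 0,\ y\ge 0\}$ and meets $\partial Q_4$ only at the isolated instants $t=0$ (on the $y$-axis) and $t=-\lambda$ (on the $x$-axis); since the $Q_4$ affine field coincides with the true field of \eqref{eq:transBif} throughout $\{x\le 0,\ y\ge 0\}$ and solutions are unique (the field is globally Lipschitz), the closed form above is indeed $\varphi(t;\mathbf{p}_t)$ on all of $[-\lambda,\infty)$. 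Evaluating at $t=-\lambda$ gives $y=0$ and $x=\varepsilon(1+\lambda-e^{\lambda})=\varepsilon(\lambda-1)-\varepsilon(e^{\lambda}-2)$, as claimed.

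For part (b) I would keep using the same formula — it is valid at $t=\tau_t=\rho/\varepsilon+1$ since $x(t)<0$ and $y(t)>0$ on $(0,\tau_t]$ — and simply evaluate:
\[
\varphi(\tau_t;\mathbf{p}_t)=\bigl(-\rho-\varepsilon\,e^{-\rho/\varepsilon-1},\ \rho+\varepsilon(1+\lambda)\bigr).
\]
On the other hand, $\mathcal{S}_{a,\varepsilon}^{+}\cap\dout_a$ is the point obtained by intersecting $y=-x+\varepsilon(1+\lambda)$ with $x=-\rho$, namely $(-\rho,\ \rho+\varepsilon(1+\lambda))$. These two points differ only in the first coordinate, by $\varepsilon\,e^{-\rho/\varepsilon-1}$, so
\[
d\bigl(\varphi(\tau_t;\mathbf{p}_t),\ \mathcal{S}_{a,\varepsilon}^{+}\cap\dout_a\bigr)=\varepsilon\,e^{-\rho/\varepsilon-1}=O\!\left(e^{-\rho/\varepsilon}\right).
\]
(The same closed form gives $x(t)+y(t)=\varepsilon(\lambda+1-e^{-t})$, i.e. the orbit sits at vertical distance $\varepsilon e^{-t}$ from $\mathcal{S}_{a,\varepsilon}^{+}$, which is the exponential attraction of that slow manifold.)

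The whole argument is elementary once the flow box is identified, so I do not expect a real obstacle; the one point I would write out with care is the bookkeeping that justifies using the single $Q_4$ formula on the closed interval $[-\lambda,\infty)$ — in particular, controlling the tangential contact with the switching line $\{x=0\}$ at $t=0$, which is exactly what the concavity of $g$ provides and is what prevents the orbit from ever re-entering $Q_1$.
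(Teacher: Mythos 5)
Your proof is correct and follows essentially the same route as the paper: establish that the orbit through $\mathbf{p}_t$ stays in $Q_4$ for $t\ge-\lambda$ (the paper via local tangency plus monotonicity of the coordinates, you via concavity of $g(t)=1-t-e^{-t}$ in the explicit $Q_4$ solution, a minor and if anything cleaner variation), then evaluate the explicit $Q_4$ flow at $t=-\lambda$ and at $\tau_t=\rho/\varepsilon+1$ to get both statements.
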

\begin{proof}
The solution $\varphi(t;\mathbf{p}_t)$ is tangent to $\{x=0\}$ at $\mathbf{p}_t$. In fact,
\[
\varphi(t;\mathbf{p}_t)=\mathbf{p}_t+\begin{pmatrix} 0\\\varepsilon \end{pmatrix} t + \begin{pmatrix} -\varepsilon \\0 \end{pmatrix} \frac {t^2}2 + O(t^3).
\]
Therefore, $\varphi(t;\mathbf{p}_t)$ is locally contained in $Q_4$. The time derivative of the first coordinate is negative for $t>0$, since the second coordinate is always increasing, we conclude that the solution is contained in $Q_4$ for $t\geq 0$. Similarly, for $t<0$ the first and the second coordinates of the solution decreases. As long as $t> -\lambda$, the second coordinate remains positive, and hence the corresponding part
of the orbit will be contained in $Q_4$. To finish the proof of the statement (a) we compute $\varphi(-\lambda; \mathbf{p}_t)$ straightforwardly from the expression of the flow in $Q_4$ given in  \eqref{eq:theFlows-x}-\eqref{eq:theFlows-y}. The proof of the statement (b) follows directly from evaluating the flow in $Q_4$ at the corresponding times. 
\end{proof}

\subsubsection{Case $\lambda < 1$}

\begin{figure}[h]
    \centering
    \includegraphics[width=\textwidth]{./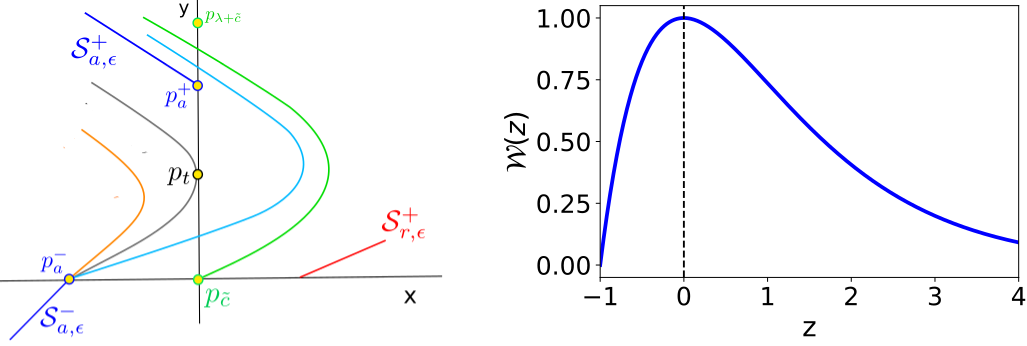}
    \caption{\textbf{Sketch of the possible behaviours of the flow through $\mathbf{p}_a^-$ for $0 < \lambda < 1$}. (Left) The orbit through $\mathbf{p}_t$ (grey curve for $\lambda = \ln(2)$) separates the orbits which are fully contained in $Q_4$ (orange curve for $\lambda \leq \ln(2)$) and the ones that also visit $Q_1$ (cyan curve for $ \ln(2) < \lambda < 1$). The trajectories through the origin (green curve), also determine the maximal height at which it is possible to cross the $y$-axis. (Right) Graph of the map $\mathcal{W}(z)$ on the domain $[-1, \infty$).}\label{fig:lm1.smconnection}

    \label{fig:lemmas}
\end{figure}

First, we enunciate the following lemmas that will help us to determine, for a given $\lambda<1$ value, the possible paths of the orbit starting at the point $\mathbf{p}_a^- = (\varepsilon(\lambda-1), 0)^\top $ given in \eqref{eq:pa-}. This orbit corresponds to the evolution of the attracting slow manifold $\mathcal{S}_{a,\varepsilon}^-$.

\begin{lemma}\label{lm:asm_behav_a}  Let $\varphi(t;\mathbf{p}_a^-)$ be the solution of System \eqref{eq:transBif} with initial condition at $\mathbf{p}_a^-$ and assume $\lambda\in (0,\ln(2)]$. 
\begin{itemize}
\item [a)] For $t\geq 0$, the solution $\varphi(t;\mathbf{p}_a^-)$ is contained in the quadrant $Q_4$.
\item [b)] For $\tau_a=\frac {\rho}{\varepsilon}+1+\lambda$, then $d(\varphi(\tau_a,\mathbf{p}_a^-),\mathcal{S}_{a,\varepsilon}^+ \cap \dout_a)=O(e^{-\frac {\rho}{\varepsilon}})$.
\end{itemize}
\end{lemma}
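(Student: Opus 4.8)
The plan is to reduce both statements to the explicit linear flow of \eqref{eq:transBif} inside the quadrant $Q_4$, as recorded in \eqref{eq:theFlows-x}--\eqref{eq:theFlows-y}. Since $\lambda<1$, the point $\mathbf{p}_a^-=(\varepsilon(\lambda-1),0)^\top$ has negative abscissa, and because $y'=\varepsilon>0$ the orbit enters the region $\{x<0,\ y>0\}=Q_4$ immediately for $t>0$. There the field is linear, $x'=-x-y+\lambda\varepsilon$, $y'=\varepsilon$, and integrating from $\mathbf{p}_a^-$ gives
\[
\varphi(t;\mathbf{p}_a^-)=\bigl(\varepsilon\,(\lambda+1-t-2e^{-t}),\ \varepsilon t\bigr)^\top,
\]
which is valid as long as the orbit has not crossed out of $Q_4$ (equivalently, this is the specialisation to $Q_4$ of the flow formulas \eqref{eq:theFlows-x}--\eqref{eq:theFlows-y}).

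For part (a) I must show this trajectory never leaves $Q_4$, i.e.\ that its first coordinate stays $\le 0$ for all $t\ge 0$ (the second coordinate $\varepsilon t$ is nonnegative automatically). Writing $x(t)=-\varepsilon\,\bigl(h(t)-\lambda\bigr)$ with $h(t):=t+2e^{-t}-1$, the requirement $x(t)\le 0$ for all $t\ge0$ is equivalent to $\lambda\le\min_{t\ge0}h(t)$. A short computation — $h'(t)=1-2e^{-t}$, $h''>0$ — shows that $h$ attains its minimum on $[0,\infty)$ at $t=\ln 2$, with $h(\ln 2)=\ln 2$. Hence, precisely for $\lambda\in(0,\ln 2]$, one has $x(t)\le0$ for every $t\ge0$, the orbit remains in $Q_4$, and the displayed formula holds for all $t\ge0$; this is statement (a). (At the threshold $\lambda=\ln 2$ the orbit touches $\{x=0\}$ tangentially exactly at $\varphi(\ln2;\mathbf{p}_a^-)=(0,\varepsilon\ln2)^\top=\mathbf{p}_t$, in agreement with Lemma~\ref{lem:pt_behav}.)

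For part (b), using that by (a) the formula is globally valid, I evaluate it at $\tau_a=\rho/\varepsilon+1+\lambda$: the ordinate is $\varepsilon\tau_a=\rho+\varepsilon(1+\lambda)$ and the abscissa is $\varepsilon(\lambda+1-\tau_a-2e^{-\tau_a})=-\rho-2\varepsilon e^{-\tau_a}$. Since $\mathcal{S}_{a,\varepsilon}^+\cap\dout_a=(-\rho,\ \rho+\varepsilon(1+\lambda))^\top$ (substitute $x=-\rho$ in $y=-x+\varepsilon(1+\lambda)$), I obtain
\[
d\bigl(\varphi(\tau_a;\mathbf{p}_a^-),\ \mathcal{S}_{a,\varepsilon}^+\cap\dout_a\bigr)=2\varepsilon e^{-\tau_a}=2\varepsilon e^{-1-\lambda}e^{-\rho/\varepsilon}=O\!\left(e^{-\rho/\varepsilon}\right),
\]
which proves (b). Geometrically this is transparent: $x(t)+y(t)=\varepsilon(1+\lambda)-2\varepsilon e^{-t}\to\varepsilon(1+\lambda)$, so the orbit is attracted exponentially fast to $\mathcal{S}_{a,\varepsilon}^+$, and the estimate merely quantifies the distance at the crossing time.

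The only point requiring genuine care is the global claim in (a): one must exclude a transition $Q_4\to Q_1$, and this is exactly what the hypothesis $\lambda\le\ln2=\min_{t\ge0}(t+2e^{-t}-1)$ guarantees. For $\lambda>\ln2$ the orbit does cross into $Q_1$, and a longer, separate argument is needed — this is the change of behaviour depicted in Fig.~\ref{fig:lm1.smconnection} and handled by the companion lemma covering that range of $\lambda$.
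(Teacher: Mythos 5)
Your proof is correct, and for part (a) it takes a genuinely more computational route than the paper. The paper proves (a) by comparison: Lemma~\ref{lem:pt_behav} establishes that the orbit through the tangency point $\mathbf{p}_t=(0,\varepsilon\lambda)$ stays in $Q_4$ and meets the $x$-axis at abscissa $\varepsilon(\lambda-1)-\varepsilon(e^{\lambda}-2)$, and since for $\lambda\le\ln 2$ the abscissa $\varepsilon(\lambda-1)$ of $\mathbf{p}_a^-$ lies to the left of that value, uniqueness of solutions traps $\varphi(t;\mathbf{p}_a^-)$ below the separating orbit $\gamma_{\mathbf{p}_t}$ inside $Q_4$. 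You instead integrate the $Q_4$ linear field explicitly from $\mathbf{p}_a^-$, obtain $x(t)=\varepsilon(\lambda+1-t-2e^{-t})$, and reduce (a) to $\lambda\le\min_{t\ge0}(t+2e^{-t}-1)=\ln 2$ by elementary calculus; this is self-contained (no appeal to Lemma~\ref{lem:pt_behav}), identifies $\ln 2$ as the sharp threshold rather than just a sufficient bound, and your check that the $\lambda=\ln2$ orbit only touches $\{x=0\}$ tangentially at $\mathbf{p}_t$ correctly handles the validity of the $Q_4$ formula past that boundary contact (the vector field is continuous there, so the closed-quadrant formula persists). What the paper's comparison argument buys in exchange is reuse of the same separating orbit machinery in the subsequent case $\lambda\in(\ln 2,1)$ (Lemma~\ref{lm:asm_behav_b}). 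For part (b) your argument coincides with the paper's: evaluating the $Q_4$ flow at $\tau_a=\rho/\varepsilon+1+\lambda$ gives $\bigl(-\rho-2\varepsilon e^{-\tau_a},\,\rho+\varepsilon(1+\lambda)\bigr)^\top$, whose distance to $\mathcal{S}_{a,\varepsilon}^+\cap\dout_a=(-\rho,\rho+\varepsilon(1+\lambda))^\top$ is $2\varepsilon e^{-1-\lambda}e^{-\rho/\varepsilon}=O(e^{-\rho/\varepsilon})$, exactly as claimed.
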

\begin{proof}
When $\lambda\in (0,\ln(2)]$ the first coordinate of $\mathbf{p}_a^-$ is less than the first coordinate of $\varphi(-\lambda; \mathbf{p}_t)$ see Lemma~\ref{lem:pt_behav}(a). Hence, the statement (a) follows as $\varphi(t; \mathbf{p}_t)$ is contained in $Q_4$. The statement (b) follows straightforwardly from evaluating the flow in $Q_4$ at $t=\tau_a$.
\end{proof}

In Lemma~\ref{lm:asm_behav_a} we have shown that for $0<\lambda \leq \ln(2)$, the solutions through $\mathbf{p}_a^-$ is contained in $Q_4$. 
Otherwise, for $\ln(2) < \lambda < 1$ the solution leaves $Q_4$, and evolves through $Q_1$ before entering again $Q_4$ by crossing the $y$-axis, see Lemma~\ref{lm:asm_behav_b}. In the next lemma, we provide an upper bound for the ordinate of such crossing. To that aim, we first define the functions $\mathcal{W}(z)$ and $C(\lambda)$ given by 
\begin{equation}\label{eq:f}
\mathcal{W}(z):=(1+z)e^{-z} \quad\text{and}\quad \mathcal{W}(C(\lambda))=\mathcal{W}(-\lambda),
\end{equation}
where the graph of $\mathcal{W}$ on the domain $[-1,+\infty)$ is depicted in~\Cref{fig:lm1.smconnection}(b), and $C(\lambda)$ is implicitly well-defined and analytical in the domain $(0,1)$. We straightforwardly conclude that $C'(\lambda)>0$,  $C(\lambda)>1$ if $\lambda \geq \ln(2)$, $\lim_{\lambda \to 1} C(\lambda)=+\infty$, and $\lim_{\lambda \to 1} C'(\lambda) =+\infty$.

\begin{lemma}\label{lem:exist_C}
Consider $\lambda\in (0,1)$ and let $C(\lambda)$ be the function implicitly defined in \eqref{eq:f}. The solution of System \eqref{eq:transBif} with initial condition at the origin, $\varphi(t;\mathbf{0})$, is locally contained in $Q_1$ for $t\in (0,\lambda+C(\lambda))$ and 
$\varphi(\lambda+C(\lambda);\mathbf{0})=(0,\varepsilon(\lambda+C(\lambda))^\top$.
\end{lemma}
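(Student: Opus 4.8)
The plan is to work entirely inside the first quadrant $Q_1=\{x>0,\,y>0\}$, where system \eqref{eq:transBif} reduces to the linear system $x'=x-y+\lambda\varepsilon$, $y'=\varepsilon$. First I would check that the orbit through the origin actually enters $Q_1$ for $t>0$: solving this linear problem with initial condition $\mathbf 0$ gives $y(t)=\varepsilon t$ and, by the standard variation-of-parameters computation (equivalently reading off the $Q_1$ flow from \eqref{eq:theFlows-x}--\eqref{eq:theFlows-y}), $x(t)=\varepsilon\bigl[t-(1-\lambda)(e^{t}-1)\bigr]$. Since $y(t)=\varepsilon t>0$ for $t>0$ and $x(t)=\varepsilon\bigl(\lambda t+O(t^{2})\bigr)>0$ for small $t>0$, the solution indeed lies in $Q_1$ for small positive times.

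Next I would study the auxiliary function $g(t):=t-(1-\lambda)(e^{t}-1)$, so that $x(t)=\varepsilon g(t)$ as long as the trajectory stays in $Q_1$. One has $g(0)=0$, $g'(0)=\lambda>0$ and $g''(t)=-(1-\lambda)e^{t}<0$, so $g$ is strictly concave, increasing on $(0,-\ln(1-\lambda))$ and then decreasing, with $g(t)\to-\infty$ as $t\to+\infty$; hence $g$ has exactly one zero $t^{*}>0$ and $g(t)>0$ on $(0,t^{*})$. Combined with $y(t)=\varepsilon t>0$, this shows $\varphi(t;\mathbf 0)\in Q_1$ precisely for $t\in(0,t^{*})$, so the lemma reduces to proving $t^{*}=\lambda+C(\lambda)$.

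The core step is this identification. Substituting $t=\lambda+C$ into $g(t)=0$, i.e. into $\lambda+C=(1-\lambda)(e^{\lambda+C}-1)$, and adding $1-\lambda$ to both sides before dividing by $e^{C}$, one gets the equivalent relation $(1+C)e^{-C}=(1-\lambda)e^{\lambda}$, that is $\mathcal{W}(C)=\mathcal{W}(-\lambda)$, which is exactly the defining equation \eqref{eq:f} for $C(\lambda)$. To upgrade "is a solution" to "equals $t^{*}$", I would use that $\mathcal{W}'(z)=-z e^{-z}$, so $\mathcal{W}$ strictly decreases from $\mathcal{W}(0)=1$ to $0$ on $(0,+\infty)$, while for $\lambda\in(0,1)$ the quantity $\mathcal{W}(-\lambda)=(1-\lambda)e^{\lambda}$ lies strictly between $0$ and $1$ (its derivative in $\lambda$ is $-\lambda e^{\lambda}<0$, and it equals $1$ at $\lambda=0$). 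Hence $C(\lambda)>0$ is the unique positive solution of \eqref{eq:f}, and since $\lambda+C(\lambda)>0$ is a positive zero of $g$, which has only one such zero, necessarily $t^{*}=\lambda+C(\lambda)$.

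Finally, evaluating at $t^{*}=\lambda+C(\lambda)$ yields $\varphi(\lambda+C(\lambda);\mathbf 0)=(x(t^{*}),y(t^{*}))^{\top}=(0,\varepsilon t^{*})^{\top}=(0,\varepsilon(\lambda+C(\lambda)))^{\top}$, as claimed. I do not expect a real obstacle here; the only points requiring a little care are the concavity argument that pins $t^{*}$ down as the \emph{first} return to the $y$-axis, and verifying that the quadrant-crossing equation matches the implicit definition of $C(\lambda)$ exactly, with no spurious extra root, which the monotonicity of $\mathcal{W}$ on $(0,+\infty)$ guarantees.
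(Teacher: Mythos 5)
Your proof is correct and follows essentially the same route as the paper: both use the explicit $Q_1$ expression of the flow through the origin, set the first coordinate to zero, and reduce the crossing condition via $t=\lambda+c$ to $(1+c)e^{-c}=(1-\lambda)e^{\lambda}$, i.e.\ $\mathcal{W}(c)=\mathcal{W}(-\lambda)$, so $c=C(\lambda)$. Your added concavity argument for $g(t)=t-(1-\lambda)(e^{t}-1)$ and the monotonicity of $\mathcal{W}$ on $(0,+\infty)$ merely make explicit the uniqueness of the first return to the $y$-axis, which the paper leaves implicit.
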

\begin{proof}
The vector field at the origin is given by $\varepsilon (\lambda,1)$, so that the solution with initial condition at the origin is contained in $Q_1$ for a sufficiently small time interval. From the local expression of the solution $\varphi(t;\mathbf{0})$ while in $Q_1$, see \eqref{eq:theFlows-x}, we obtain that $\varphi(t;\mathbf{0})=((\lambda-1)\varepsilon e^t+\varepsilon t-(\lambda-1)\varepsilon, \varepsilon t)$. Therefore, $\varphi(t,\mathbf{0})$ leaves $Q_1$ if there exists a positive value of $t$ for which the first coordinate is 0. Equivalently, the solution leaves $Q_1$ if there exists $c>0$ such that $(1-\lambda)e^{\lambda}=(1+c)e^{-c}$, where the time to escape is given by $t=\lambda+c$. From \eqref{eq:f} it follows that $c = C(\lambda)$ which ends the proof.
\end{proof}

In Lemma~\ref{lem:exist_C} we have obtained the point $\mathbf{p}_c=\left(0,\varepsilon(\lambda+C(\lambda))\right)^\top$ whose $y$-coordinate gives an upper bound for the crossing of the $y$-axis. We note that since $C(\lambda)$ tends to $\infty$ as $\lambda \to 1$ and the second coordinate of the vector field is positive, reaching the attracting manifold $\mathcal{S}_{a,\varepsilon}^+$ at an ordinate $\rho$ introduces naturally a maximal value for $\varepsilon$ as a function of $\lambda$. Consider the function $\varepsilon_0:(0,1)\to \mathbb{R}^+$ given by
\begin{equation}\label{def:eps0}
\varepsilon_0(\lambda)=\frac {e^{1-C(\lambda)}}{C(\lambda)-1},
\end{equation}
where $C(\lambda)$ is the analytical function implicitly given in \eqref{eq:f}. Straightforward computations show that 
\[
\lim_{\lambda \to 1} \varepsilon_0(\lambda)=0, \quad \lim_{\lambda \to 1} \varepsilon_0'(\lambda)=-\infty. 
\]
In the next lemma we study the evolution of the solution through $\mathbf{p}_c$ after entering quadrant $Q_4$. 

\begin{lemma}\label{lem:exist_eps0}
Fix $\lambda \in (\ln(2),1)$ and let $\varphi(t;\mathbf{p}_c)$ be the solution with initial condition at $\mathbf{p}_c=\left(0,\varepsilon(\lambda+C(\lambda))\right)^\top$. 
\begin{itemize}
\item [a)] For $t\geq 0$, the solution $\varphi(t;\mathbf{p}_c)$ is contained in the quadrant $Q_4$.
\item [b)] Consider $0<\varepsilon\leq \varepsilon_0(\lambda)$, for $\tau_c=\frac {\rho}{\varepsilon}+(C(\lambda)-1)(e^{-\frac {\rho}{\varepsilon}}-1)$, then $d(\varphi(\tau_c,\mathbf{p}_c),\mathcal{S}_{a,\varepsilon}^+ \cap \dout_a)=O(e^{-\frac {\rho}{\varepsilon}})$.
\end{itemize}
\end{lemma}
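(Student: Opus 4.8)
The plan is to read both statements off the explicit expression of the flow of \eqref{eq:transBif} in the quadrant $Q_4$ (see \eqref{eq:theFlows-x}--\eqref{eq:theFlows-y}), evaluated at the initial condition $\mathbf{p}_c=\left(0,\varepsilon(\lambda+C(\lambda))\right)^{\top}$. Substituting, exactly as in the computation of $\varphi(t;\mathbf 0)$ used in Lemma~\ref{lem:exist_C} but now in $Q_4=\{x\le 0,\;y\ge 0\}$ instead of $Q_1$, one gets
\[
\varphi(t;\mathbf{p}_c)=\Bigl(\,\varepsilon(C(\lambda)-1)\bigl(e^{-t}-1\bigr)-\varepsilon t,\;\;\varepsilon(\lambda+C(\lambda))+\varepsilon t\,\Bigr)^{\top},\qquad t\ge 0,
\]
the key structural feature being that the coefficient $\varepsilon(\lambda+C(\lambda))-\varepsilon(1+\lambda)=\varepsilon(C(\lambda)-1)$ of the fast decaying mode is precisely the signed vertical distance, measured on the $y$-axis, from $\mathbf{p}_c$ to $\mathcal{S}_{a,\varepsilon}^{+}$; it is positive because $\lambda>\ln 2$ forces $C(\lambda)>1$ (one of the properties of $C(\lambda)$ recorded right after \eqref{eq:f}).

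For part (a): for every $t\ge 0$ the first component of $\varphi(t;\mathbf{p}_c)$ is a sum of two nonpositive terms, $\varepsilon(C(\lambda)-1)(e^{-t}-1)\le 0$ and $-\varepsilon t\le 0$, while the second component is $\ge\varepsilon(\lambda+C(\lambda))>0$. Hence $\varphi(t;\mathbf{p}_c)$ stays in $Q_4$ for all $t\ge 0$ (in its interior for $t>0$): the orbit never re-enters $Q_1$ nor crosses to $y<0$, which is statement (a); in particular the $Q_4$-flow formula above is valid for all $t\ge 0$.

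For part (b): I would evaluate the formula at $t=\tau_c=\tfrac{\rho}{\varepsilon}+(C(\lambda)-1)(e^{-\rho/\varepsilon}-1)$, using the identity $\varepsilon\tau_c=\rho+\varepsilon(C(\lambda)-1)(e^{-\rho/\varepsilon}-1)$, which gives
\[
\varphi(\tau_c;\mathbf{p}_c)=\Bigl(\,-\rho+\varepsilon(C(\lambda)-1)\bigl(e^{-\tau_c}-e^{-\rho/\varepsilon}\bigr),\;\;\rho+\varepsilon(1+\lambda)+\varepsilon(C(\lambda)-1)e^{-\rho/\varepsilon}\,\Bigr)^{\top}.
\]
Since $e^{-\tau_c}=e^{-\rho/\varepsilon}\,e^{(C(\lambda)-1)(1-e^{-\rho/\varepsilon})}$ and $0\le 1-e^{-\rho/\varepsilon}\le 1$, one has $e^{-\tau_c}-e^{-\rho/\varepsilon}\in[0,(e^{C(\lambda)-1}-1)e^{-\rho/\varepsilon}]$, so both coordinate remainders are $O(\varepsilon e^{-\rho/\varepsilon})=O(e^{-\rho/\varepsilon})$ with $\lambda$, and hence $C(\lambda)$, fixed. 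As $\mathcal{S}_{a,\varepsilon}^{+}\cap\dout_a=(-\rho,\rho+\varepsilon(1+\lambda))$, the triangle inequality gives $d\bigl(\varphi(\tau_c;\mathbf{p}_c),\mathcal{S}_{a,\varepsilon}^{+}\cap\dout_a\bigr)=O(e^{-\rho/\varepsilon})$, as claimed. (Equivalently one first recognises $\tau_c$ as the first fixed-point iterate of the exact time $T>0$ at which the strictly decreasing function $x(t)$ reaches $-\rho$, with $|\tau_c-T|=O(e^{-\rho/\varepsilon})$ and the vector field of size $O(\varepsilon)$ along this segment.)

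The argument is essentially a bookkeeping exercise; the two points needing care are (i) tracking the exponentially small remainders together with the finite, $\lambda$-dependent constants $e^{C(\lambda)-1}$ multiplying them (finite precisely because $\lambda<1$), and (ii) checking that the hypothesis $0<\varepsilon\le\varepsilon_0(\lambda)$, with $\varepsilon_0(\lambda)$ as in \eqref{def:eps0}, is what guarantees $\tau_c>0$ — equivalently, that $\mathbf{p}_c$ lies below the level at which the forward orbit meets $\dout_a$ — so that evaluating the $Q_4$-flow at $\tau_c$ is a genuine forward-time statement. This is exactly where the degeneracy of $\lambda=1$ manifests, since $C(\lambda)\to+\infty$ as $\lambda\to1$ forces $\varepsilon_0(\lambda)\to0$. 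This is the main, though mild, obstacle.
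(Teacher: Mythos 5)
Your computation is essentially the paper's own proof: both parts read off the explicit $Q_4$ expression of the flow through $\mathbf{p}_c$, establish forward invariance of $Q_4$ from it, evaluate at $\tau_c$, and compare with $(-\rho,\rho+\varepsilon(1+\lambda))^{\top}=\mathcal{S}_{a,\varepsilon}^{+}\cap\Delta^{\text{out}}_{a}$, with the same remainder bookkeeping.

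The one point where you diverge from the paper is the role of the hypothesis $0<\varepsilon\le\varepsilon_0(\lambda)$. In the paper it enters exactly at the estimate step: by \eqref{def:eps0}, $\varepsilon(C(\lambda)-1)\le e^{1-C(\lambda)}$, so the prefactor $\varepsilon(C(\lambda)-1)\bigl(e^{(C(\lambda)-1)(1-e^{-\rho/\varepsilon})}-1\bigr)$ multiplying $e^{-\rho/\varepsilon}$ in the first coordinate is bounded by $e$; that is, the hypothesis makes the implied constant in $O(e^{-\rho/\varepsilon})$ uniform in $\lambda$, which is precisely what gives $\varepsilon_0(\lambda)$ its meaning and its collapse as $\lambda\to1$ in Remark~\ref{rem:casoc}. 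Your reading --- that $\varepsilon\le\varepsilon_0(\lambda)$ is what guarantees $\tau_c>0$ --- is not accurate: $\tau_c\ge \rho/\varepsilon-(C(\lambda)-1)$ is positive for every sufficiently small $\varepsilon$ regardless of $\varepsilon_0$, since $(C(\lambda)-1)(1-e^{-\rho/\varepsilon})\le C(\lambda)-1$ stays bounded while $\rho/\varepsilon\to\infty$; and conversely $\varepsilon\le\varepsilon_0(\lambda)$ alone does not force $\tau_c>0$ (for instance when $\rho e^{C(\lambda)-1}<1$ one can have $\tau_c<0$ at $\varepsilon=\varepsilon_0(\lambda)$). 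For fixed $\lambda$ your estimate is still correct, because the $\lambda$-dependent constant $e^{C(\lambda)-1}$ is then harmless; just be aware that the hypothesis is there to control that constant uniformly, not the sign of $\tau_c$.
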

\begin{proof}
The solution with initial condition $\mathbf{p}_c$ does not intersect with the $x$-nullcline at $t\geq 0$. Hence, it is contained in $Q_4$ for $t\geq 0$ and its local expression is given by
\[
\varphi(t; \mathbf{p}_c)=
\varepsilon
\begin{pmatrix}
(C(\lambda) - 1)(e^{-t}-1) -t \\
(\lambda+C(\lambda))+t
\end{pmatrix}
\]
see equations in \eqref{eq:theFlows-x}-\eqref{eq:theFlows-y}.  Therefore, at time $\tau_c=\frac {\rho}{\varepsilon}+(C(\lambda)-1)(e^{-\frac {\rho}{\varepsilon}}-1)$, the solution satisfies that 
\begin{align*}
\varphi(\tau_c; \mathbf{p}_c)&=\begin{pmatrix}
\varepsilon(C(\lambda) - 1)(e^{-\frac {\rho}{\varepsilon}-(C(\lambda)-1)(e^{-\frac {\rho}{\varepsilon}}-1)}-1) -\rho -\varepsilon(C(\lambda)-1)(e^{-\frac {\rho}{\varepsilon}}-1) \vspace{.2cm} \\
\varepsilon(\lambda+C(\lambda))+\rho+\varepsilon(C(\lambda)-1)(e^{-\frac {\rho}{\varepsilon}}-1)
\end{pmatrix}\\
&=\begin{pmatrix}
-\rho + \varepsilon(C(\lambda) - 1)e^{-\frac {\rho}{\varepsilon}}  \left(e^{-(C(\lambda)-1)(e^{-\frac {\rho}{\varepsilon}}- 1)}-1\right) \vspace{.2cm} \\
\rho+ \varepsilon(\lambda+1)+\varepsilon(C(\lambda)-1)e^{-\frac {\rho}{\varepsilon}}\\
\end{pmatrix}.
\end{align*}
%
Then, 
\[
\varphi(\tau_c; \mathbf{p}_c)-\begin{pmatrix}-\rho\\ \rho+\varepsilon (1+\lambda)\end{pmatrix}=
e^{-\frac {\rho}{\varepsilon}}
\begin{pmatrix}
\varepsilon(C(\lambda) - 1)\left(e^{(C(\lambda)-1)(1-e^{-\frac {\rho}{\varepsilon}})}-1\right) \vspace{.2cm} \\
\varepsilon(C(\lambda) - 1)\\
\end{pmatrix}.
\]
Since $\varepsilon\leq \varepsilon_0(\lambda)$, from \eqref{def:eps0} it follows that 
\[
d\left(\varphi(\tau_c; \mathbf{p}_c)-\begin{pmatrix}-\rho\\ \rho+\varepsilon (1+\lambda)\end{pmatrix}\right)=O(e^{-\frac{\rho}{\varepsilon}}),
\]
which ends the proof. 
\end{proof}

In the next lemma we study the behaviour of the solution $\varphi(t;\mathbf{p}_a^-)$ for $\lambda\in (\ln(2),1)$ after re-entering quadrant $Q_4$. We take profit of the solutions through $\mathbf{p}_t$ and $\mathbf{p}_c$ described in Lemmas~\ref{lm:asm_behav_a} and \ref{lem:exist_eps0}.

\begin{lemma}\label{lm:asm_behav_b} 
Fix $\lambda\in (\ln(2),1)$ and set $0<\varepsilon\leq \varepsilon_0(\lambda)$. Let $\varphi(t;\mathbf{p}_a^-)$ be the solution of System \eqref{eq:transBif} with initial condition at $\mathbf{p}_a^-$. For $\tau=\tau_a+O(e^{-\frac {\rho}{\varepsilon}})$ with $\tau_a$ in Lemma~\ref{lm:asm_behav_a}, then  
$d(\varphi(\tau,\mathbf{p}_a^-),\mathcal{S}_{a,\varepsilon}^+ \cap \dout_a)=O(e^{-\frac {\rho}{\varepsilon}})$.
\end{lemma}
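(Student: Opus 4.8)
The plan is to trace the orbit $\varphi(t;\mathbf{p}_a^-)$ through its three successive phases, using the previously established Lemmas as black boxes, and then patch the phases together while controlling how the exponentially small errors propagate. First I would observe that for $\lambda\in(\ln(2),1)$ the first coordinate of $\mathbf{p}_a^-=(\varepsilon(\lambda-1),0)^\top$ is now \emph{larger} than the first coordinate $\varepsilon(\lambda-1)-\varepsilon(e^\lambda-2)$ of $\varphi(-\lambda;\mathbf{p}_t)$ (since $e^\lambda-2>0$ exactly when $\lambda>\ln 2$); geometrically this means the orbit through $\mathbf{p}_a^-$ lies \emph{above} the separating orbit through $\mathbf{p}_t$, so it does not stay in $Q_4$ but crosses the negative $x$-axis and enters $Q_1$. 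While in $Q_1$ the system is linear, so I would write the explicit solution (as in the proof of Lemma~\ref{lem:exist_C}) and confront the orbit with the orbit through the origin $\varphi(t;\mathbf{0})$ analysed in Lemma~\ref{lem:exist_C}: since $\mathbf{p}_a^-$ enters $Q_1$ at a point with negative ordinate that is $O(\varepsilon)$-close to the origin, and since orbits in $Q_1$ depend continuously (indeed linearly) on initial conditions, the orbit through $\mathbf{p}_a^-$ exits $Q_1$ through the positive $y$-axis at a point that is $O(\varepsilon e^{C(\lambda)})$-close — hence at any rate $O(\varepsilon)$-close — to the exit point $\mathbf{p}_c=(0,\varepsilon(\lambda+C(\lambda)))^\top$ of the orbit through the origin.

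The second phase is the re-entry into $Q_4$. Here I invoke Lemma~\ref{lem:exist_eps0}: the orbit through $\mathbf{p}_c$, under the standing hypothesis $0<\varepsilon\le\varepsilon_0(\lambda)$, stays in $Q_4$ for all forward time and reaches an $O(e^{-\rho/\varepsilon})$-neighbourhood of $\mathcal{S}_{a,\varepsilon}^+\cap\dout_a$ at time $\tau_c$. The orbit through $\mathbf{p}_a^-$ re-enters $Q_4$ not at $\mathbf{p}_c$ but at a nearby point; since the $Q_4$ flow is linear with slow eigenvalue $0$ and fast eigenvalue $+1$ directed transverse to $\mathcal{S}_{a,\varepsilon}^+$, while the $y$-coordinate drifts at rate $\varepsilon$, the two orbits stay uniformly close on the time scale $\rho/\varepsilon$ and the orbit through $\mathbf{p}_a^-$ also lands in an $O(e^{-\rho/\varepsilon})$-neighbourhood of $\mathcal{S}_{a,\varepsilon}^+\cap\dout_a$. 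The total elapsed time is the sum of: the $Q_4$-time before leaving to $Q_1$, the $Q_1$-transit time (close to $\lambda+C(\lambda)$), and the $Q_4$-time $\tau_c$ afterwards; a bookkeeping comparison with $\tau_a=\rho/\varepsilon+1+\lambda$ of Lemma~\ref{lm:asm_behav_a} shows the discrepancy is absorbed into the $O(e^{-\rho/\varepsilon})$ correction claimed in the statement, because the $C(\lambda)$-dependent pieces cancel against the definition of $\mathbf{p}_c$ up to exponentially small terms and the condition $\varepsilon\le\varepsilon_0(\lambda)$ converts the $C(\lambda)e^{-\rho/\varepsilon}$ factors into genuinely small quantities.

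The main obstacle I anticipate is the error propagation in the second phase: a transverse displacement from $\mathbf{p}_c$ enters the $Q_4$ dynamics along the \emph{expanding} (fast, eigenvalue $+1$) direction, so naively a deviation $\eta$ at the entry of $Q_4$ is amplified to $\eta\, e^{\rho/\varepsilon}$ over the relevant time, which would destroy the claimed estimate. The resolution — and the reason the hypothesis $\varepsilon\le\varepsilon_0(\lambda)$ is exactly the right one — is that the orbit through $\mathbf{p}_a^-$ is \emph{squeezed between} the separating orbit through $\mathbf{p}_t$ and the orbit through $\mathbf{p}_c$ (both already controlled in Lemmas~\ref{lm:asm_behav_a} and \ref{lem:exist_eps0}), so I do not estimate it by linearised flow-comparison but by a sandwiching/monotonicity argument: its exit point on $\dout_a$ lies in the interval between the exit points of those two reference orbits, and both endpoints of that interval are within $O(e^{-\rho/\varepsilon})$ of $\mathcal{S}_{a,\varepsilon}^+\cap\dout_a$. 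This turns a potentially divergent Gronwall-type estimate into a clean interval-trapping argument, and that is the step I would write out most carefully.

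\begin{proof}
For $\lambda\in(\ln 2,1)$ we have $e^\lambda-2>0$, hence the first coordinate of $\mathbf{p}_a^-=(\varepsilon(\lambda-1),0)^\top$ exceeds that of $\varphi(-\lambda;\mathbf{p}_t)=(\varepsilon(\lambda-1)-\varepsilon(e^\lambda-2),0)^\top$; by Lemma~\ref{lem:pt_behav}(a) the orbit through $\mathbf{p}_t$ separates the orbits contained in $Q_4$ from those also visiting $Q_1$, so $\varphi(t;\mathbf{p}_a^-)$ crosses the negative $x$-axis and enters $Q_1$. Using the linear flow in $Q_1$ (cf.~the proof of Lemma~\ref{lem:exist_C}) and the fact that $\varphi(t;\mathbf{p}_a^-)$ enters $Q_1$ at a point whose distance to the origin is $O(\varepsilon)$, the orbit exits $Q_1$ through the positive $y$-axis at a point $O(\varepsilon)$-close to $\mathbf{p}_c=(0,\varepsilon(\lambda+C(\lambda)))^\top$, and the time spent before and inside $Q_1$ differs from $\lambda+C(\lambda)$ by $O(\varepsilon)$, hence contributes only to the $O(e^{-\rho/\varepsilon})$ bookkeeping after the rescaling by $\varepsilon\le\varepsilon_0(\lambda)$.

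After re-entering $Q_4$, the orbit $\varphi(t;\mathbf{p}_a^-)$ is trapped, for all relevant forward time, between the orbit through $\mathbf{p}_t$ (Lemma~\ref{lm:asm_behav_a}) and the orbit through $\mathbf{p}_c$ (Lemma~\ref{lem:exist_eps0}), since no orbit of the linear $Q_4$ flow crosses another and all three remain in $Q_4$ for $t\ge0$. By Lemma~\ref{lm:asm_behav_a}(b) the first reference orbit reaches an $O(e^{-\rho/\varepsilon})$-neighbourhood of $\mathcal{S}_{a,\varepsilon}^+\cap\dout_a$ at time $\tau_a$, and by Lemma~\ref{lem:exist_eps0}(b) the second does so at time $\tau_c=\frac\rho\varepsilon+(C(\lambda)-1)(e^{-\rho/\varepsilon}-1)=\tau_a+O(e^{-\rho/\varepsilon})$, using $\varepsilon\le\varepsilon_0(\lambda)$ to absorb the $C(\lambda)$-dependence. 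Therefore the exit point of $\varphi(t;\mathbf{p}_a^-)$ on $\dout_a$ lies in the interval between these two exit points, both of which are within $O(e^{-\rho/\varepsilon})$ of $\mathcal{S}_{a,\varepsilon}^+\cap\dout_a$, so the same bound holds for $\varphi(t;\mathbf{p}_a^-)$. Summing the three phase-times and comparing with $\tau_a=\frac\rho\varepsilon+1+\lambda$ yields $\tau=\tau_a+O(e^{-\rho/\varepsilon})$ and $d(\varphi(\tau;\mathbf{p}_a^-),\mathcal{S}_{a,\varepsilon}^+\cap\dout_a)=O(e^{-\rho/\varepsilon})$, as claimed.
\end{proof}
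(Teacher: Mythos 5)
Your argument is essentially the paper's own proof: the whole content is the trapping of $\varphi(t;\mathbf{p}_a^-)$ between the (time-shifted) reference orbits through $\mathbf{p}_t$ and $\mathbf{p}_c$, followed by the estimates of Lemmas~\ref{lem:pt_behav}(b)/\ref{lm:asm_behav_a}(b) and \ref{lem:exist_eps0}(b), which is exactly the one-line sandwich the paper uses. Two slips are harmless but worth fixing: the orbit enters $Q_1$ by crossing the positive $y$-axis below $\mathbf{p}_t$ (not the negative $x$-axis, and at a positive ordinate), and $\tau_c$ is not $\tau_a+O(e^{-\rho/\varepsilon})$ but differs from it by $\lambda+C(\lambda)$ up to exponentially small terms; neither is needed, since $\varphi_2(t;\mathbf{p}_a^-)=\varepsilon t$ puts the orbit at height $\rho+\varepsilon(1+\lambda)$ exactly at $t=\tau_a$, where the horizontal sandwich between the two bounding orbits already gives the $O(e^{-\frac{\rho}{\varepsilon}})$ distance to $\mathcal{S}_{a,\varepsilon}^+\cap\dout_a$.
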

\begin{proof}
Notice that $\varphi(t;\mathbf{p}_a^-)$ is located between $\varphi(t-\lambda;\mathbf{p}_t)$ and $\varphi(t-(\lambda+C(\lambda));\mathbf{p}_c)$ for $t>0$. Therefore, the proof follows straightforwardly from Lemmas~\ref{lm:asm_behav_a}(b) and \ref{lem:exist_eps0}(b).
\end{proof}

Next, we start the proof of the statement (b) of Theorem~\ref{th:main1}. To that aim, let us first consider the point $\pin_\delta = \big(-\rho, -\rho + \varepsilon(1-\lambda) + \delta \big) \in \din$. In order to reach $\dout$, we first need to integrate it until $y=0$ (the change from quadrant $Q_3$ to quadrant $Q_4$). One can easily show that for $\tau=\frac{\rho+\delta}{\varepsilon}+1-\lambda$ we get
\begin{equation}
\varphi(\tau, \pin_\delta) = \mathbf{p}_a^-+ \delta e^{-\frac {\rho+\delta}{\varepsilon}+(1-\lambda)}\mathbf{e}_1
\end{equation}
which is exponentially close to $\mathbf{p}_a^-$.

For $\lambda \leq \ln(2)$, following similar arguments than those in the proof of Lemma~\ref{lm:asm_behav_a} we conclude that for $\tau=\frac{\rho + \delta}{\varepsilon}+ 1+\lambda$ then
\[
d(\varphi(\tau,\mathbf{p}_a^-+ \delta e^{-\frac {\rho+\delta}{\varepsilon}+(1-\lambda)}\mathbf{e}_1),\mathcal{S}_{a,\varepsilon}^+ \cap \dout_a)=O(e^{-\frac {\rho}{\varepsilon}}).
\]
On the contrary, for $\ln(2)<\lambda< 1$, similarly as in the proof of Lemma~\ref{lm:asm_behav_b}, the solution $\varphi(t;\mathbf{p}_a^-+ \delta e^{-\frac {\rho+\delta}{\varepsilon}+(1-\lambda)}\mathbf{e}_1)$ is located between the solutions $\varphi(t-\lambda;\mathbf{p}_t)$ and $\varphi(t-(\lambda+C(\lambda));\mathbf{p}_c)$ for $t>0$. Hence, 
\[
d(\varphi(\tau,\mathbf{p}_a^-+ \delta e^{-\frac {\rho+\delta}{\varepsilon}+(1-\lambda)}\mathbf{e}_1),\mathcal{S}_{a,\varepsilon}^+ \cap \dout_a)=O(e^{-\frac {\rho}{\varepsilon}}),
\]
with $\tau=\tau_a+O(e^{-\frac{\rho}{\varepsilon}})$, which ends the proof of the statement (b) in Theorem~\ref{th:main1}.

\subsubsection{Case $\lambda > 1$}

Consider now the function $\varepsilon_1:(1,+\infty) \to \mathbb{R}$ given by
\begin{equation}\label{def:ep1}
    \varepsilon_1(\lambda)= \min \left\{  \left(\frac {2(e^{\lambda-1}-1)}{\rho+\lambda-1} \right)^{\frac 1 2},
    \frac {\rho}{2e^{\lambda-1}-(\lambda+1)}
     \right\}.    
\end{equation}
Notice that for $\lambda$ close to 1 the value of $\varepsilon_1(\lambda)$ is given by the first function in \eqref{def:ep1}. As a consequence, $\lim_{\lambda \to 1 } \varepsilon_1(\lambda)=0$ and $\lim_{\lambda\to 1} \frac {d\varepsilon_1(\lambda)}{d\lambda}=-\infty.$

When $\lambda>1$, the slow manifold $\mathcal{S}_{a,\varepsilon}^-$ leaves the region $Q_3$ at the point $\mathbf{p}_a^-=(0,\varepsilon(1-\lambda))$, see \eqref{eq:pa-}. Hence, by following the flow in the region $Q_2$, the solution through $\mathbf{p}_a^-$ writes as 
\[
\varphi(t;\mathbf{p}_a^-)=\begin{pmatrix} 2\varepsilon (e^t-1) -\varepsilon t \\ (1-\lambda)\varepsilon +\varepsilon t\end{pmatrix},
\]
see \eqref{eq:theFlows-x}, for $t\in(0, \tau_1)$ where $\tau_1:=\lambda-1$. Moreover, the solution $\varphi(t; \mathbf{p}_a^-)$ crosses from the quadrant $Q_2$ to the quadrant $Q_1$ through the point $\mathbf{p}_1=\varphi(\tau_1;\mathbf{p}_a^-)$. In the next result we describe the behaviour of the solution through $\mathbf{p}_1$.  

\begin{lemma}\label{lem:exist_eps1}
Fix $\lambda >1 $ and set $0<\varepsilon\leq \varepsilon_1(\lambda)$. Let $\varphi(t;\mathbf{p}_1)$ be the solution with initial condition at the point $\mathbf{p}_1=(2\varepsilon e^{\lambda-1}-\varepsilon(1+\lambda),0)^\top$. 
\begin{itemize}
\item [a)] For $t\geq 0$, the solution $\varphi(t;\mathbf{p}_1)$ is contained in the quadrant $Q_1$.
\item [b)] There exist functions $\tau(\varepsilon)$ and $h(\varepsilon)$ such that  $\varphi(\tau(\varepsilon);\mathbf{p}_1)=(\rho, h(\varepsilon))^\top$, where $\tau(\varepsilon)=O(|\ln(\varepsilon)|)$ and $h(\varepsilon)=O(\varepsilon|\ln(\varepsilon)|)$.
\end{itemize}
\end{lemma}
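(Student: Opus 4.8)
The plan is to derive both statements directly from the closed-form local expression of the flow of \eqref{eq:transBif} in the quadrant $Q_1$, where $|x|=x$ and $|y|=y$, supplied by \eqref{eq:theFlows-x}--\eqref{eq:theFlows-y}. First I would record that the initial point $\mathbf{p}_1=(2\varepsilon e^{\lambda-1}-\varepsilon(1+\lambda),0)^{\top}$ really lies on the positive $x$-axis, i.e. that $g(\lambda):=2e^{\lambda-1}-\lambda-1>0$ for $\lambda>1$: the function $g$ is convex with $g(1)=0$ and $g'(1)=1$, so $g(\lambda)\ge\lambda-1>0$. Substituting $\mathbf{p}_1$ into the $Q_1$ flow gives
\[
\varphi(t;\mathbf{p}_1)=\bigl(\,2\varepsilon(e^{\lambda-1}-1)e^{t}+\varepsilon(t+1-\lambda)\,,\ \varepsilon t\,\bigr)^{\top},
\]
and the rest of the proof is read off this formula.

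For part (a), the time derivative of the first component is $2\varepsilon(e^{\lambda-1}-1)e^{t}+\varepsilon$, which is strictly positive for all $t$ because $\lambda>1$; hence the first component is increasing, and since it equals $\varepsilon\,g(\lambda)>0$ at $t=0$ it stays positive for $t\ge0$. Together with the second component $\varepsilon t\ge0$, this keeps $\varphi(t;\mathbf{p}_1)$ in $Q_1$ for all $t\ge0$. Note that only $\lambda>1$ is used here, not the bound $\varepsilon\le\varepsilon_1(\lambda)$.

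For part (b), by part (a) the first component $x(t)$ is continuous and strictly increasing on $[0,\infty)$ with $x(t)\to+\infty$, so the orbit meets $\dout_{e}=\{x=\rho\}$ at a single time $\tau(\varepsilon)\ge0$, the unique root of $F(t):=x(t)-\rho$, provided $F(0)=\varepsilon g(\lambda)-\rho\le0$ — precisely the second entry of the minimum defining $\varepsilon_1(\lambda)$ in \eqref{def:ep1} — and by part (a) the orbit stays in $Q_1$ up to time $\tau(\varepsilon)$, so the closed form is valid there. To bound $\tau(\varepsilon)$ I would evaluate $F$ at $t=3|\ln\varepsilon|$; using $e^{3|\ln\varepsilon|}=\varepsilon^{-3}$ this gives $F(3|\ln\varepsilon|)=2(e^{\lambda-1}-1)\varepsilon^{-2}-\rho+\varepsilon(3|\ln\varepsilon|+1-\lambda)$, and the first entry of the minimum in \eqref{def:ep1}, equivalently $\varepsilon^{2}(\rho+\lambda-1)\le 2(e^{\lambda-1}-1)$, yields $2(e^{\lambda-1}-1)\varepsilon^{-2}\ge\rho+\lambda-1$, whence $F(3|\ln\varepsilon|)\ge(\lambda-1)(1-\varepsilon)+3\varepsilon|\ln\varepsilon|>0$ (recall $0<\varepsilon\ll1$). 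Monotonicity of $F$ then forces $0\le\tau(\varepsilon)<3|\ln\varepsilon|$, so $\tau(\varepsilon)=O(|\ln\varepsilon|)$ and consequently $h(\varepsilon)=y(\tau(\varepsilon))=\varepsilon\tau(\varepsilon)<3\varepsilon|\ln\varepsilon|=O(\varepsilon|\ln\varepsilon|)$. If a matching lower bound is wanted, one checks $F(\tfrac12|\ln\varepsilon|)<0$ in the same fashion, so that $\tau(\varepsilon)$ and $h(\varepsilon)$ are of genuine orders $|\ln\varepsilon|$ and $\varepsilon|\ln\varepsilon|$.

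The manipulations above are elementary once the $Q_1$ flow is written out; the only delicate point — and the reason the hypothesis is stated with the particular $\varepsilon_1(\lambda)$ of \eqref{def:ep1} — is the behaviour near $\lambda=1$, where $e^{\lambda-1}-1\to0$ and hence both the location of $\mathbf{p}_1$ and the crossing time $\tau(\varepsilon)$ degenerate. One must verify that the two thresholds in the minimum are exactly what is needed to guarantee, uniformly for $\varepsilon\le\varepsilon_1(\lambda)$, both $F(0)\le0$ (existence of the crossing) and $F(3|\ln\varepsilon|)>0$ (the $O(|\ln\varepsilon|)$ estimate). I expect this calibration, rather than any conceptual difficulty, to be the main obstacle.
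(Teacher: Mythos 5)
Your proof is correct and follows essentially the same route as the paper: the closed-form flow in $Q_1$, the second entry of the minimum defining $\varepsilon_1(\lambda)$ in \eqref{def:ep1} to ensure the crossing with $\{x=\rho\}$ exists, and the first entry to show the crossing happens before time $3|\ln(\varepsilon)|$, giving $h(\varepsilon)\leq 3\varepsilon|\ln(\varepsilon)|$. The only difference is cosmetic: where you evaluate the explicit solution at $t=3|\ln(\varepsilon)|$ and use monotonicity in $t$, the paper obtains the same bound by comparing the orbit of $\mathbf{p}_1$ with the auxiliary orbit through $\mathbf{p}_d$ of Lemma~\ref{lem:tech} with $d=-\ln(\varepsilon^3)$, which is your computation repackaged as a comparison of initial conditions on the $x$-axis.
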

\begin{proof}
Notice that in the region of $Q_1$ limited by the coordinate axes and by the repelling slow manifold $\mathcal{S}_{r,\varepsilon}^+$, the vector field satisfies $\dot{x}>0$ and $\dot{y}>0$, see \eqref{eq:transBif}. Therefore, both coordinates of the solution $\varphi(t;\mathbf{p}_1)$ are strictly increasing and hence, it is contained in $Q_1$ for $t\geq 0$.

Let $x_1=\varepsilon(2e^{\lambda-1}-(\lambda+1))>0$ be the first coordinate of the point $\mathbf{p}_1$. Since 
\[
\varepsilon < \varepsilon_1(\lambda)\leq \frac {\rho}{2e^{\lambda-1}-(\lambda+1)},
\] 
it follows that  $x_1 < \rho$. As both coordinates of the solutions are increasing with the time, there exist positive functions $\tau(\varepsilon)$ and $h(\varepsilon)$ such that $\varphi(\tau(\varepsilon);\mathbf{p}_1)=(\rho,h(\varepsilon))^\top$. Next, we study the order in $\varepsilon$ of the functions $\tau(\varepsilon)$ and $h(\varepsilon
)$.

Set $d=-\ln(\varepsilon^3)>0$ and consider the first coordinate 
$
x_d= (\rho-\varepsilon d +(\lambda-1)\varepsilon)e^{-d}-(\lambda-1)\varepsilon,
$ 
of the point $\mathbf{p}_d$ introduced in Lemma \ref{lem:tech}. Hence, 
\begin{align*}
    x_d &=(\rho+ 3\varepsilon \ln(\varepsilon) +(\lambda-1)\varepsilon)\varepsilon^3-(\lambda-1)\varepsilon\\
        &<\varepsilon \left( (\rho + \lambda-1 )\varepsilon^2-\lambda+1\right) \\
        &\leq \varepsilon \left( (\rho + \lambda-1 )\varepsilon_1^2(\lambda)-\lambda+1\right) \\
        &=\varepsilon (2e^{\lambda-1}-(\lambda+1)),
\end{align*}
that is $x_d<x_1$. Consequently, it follows that the orbit defined by the solution  $\phi(t;\mathbf{p}_d)$ introduced in Lemma \ref{lem:tech} bounds in $Q_1$ the orbit defined by $\varphi(t;\mathbf{p}_1)$, including their intersections with $\Delta_{e}^{out}$. From Lemma \ref{lem:tech}, we conclude that $h(\varepsilon)=O(\varepsilon|\ln(\varepsilon)|)$ and $\tau(\varepsilon)=O(|\ln(\varepsilon)|)$, which ends the proof. 
\end{proof}

Lemma \ref{lem:exist_eps1} proves the first part of the statement (a) of Theorem \ref{th:main1}. Next, we finish this proof. To that aim, let us consider the point $\pin_\delta = \big(-\rho, -\rho + \varepsilon(1-\lambda) + \delta \big) \in \din$. In order to reach $\dout$, we first need to integrate it until $x=0$ (the change from quadrant $Q_3$ to quadrant $Q_2$). One can easily show that for $\widetilde{\tau}_1=\frac{\rho-\delta}{\varepsilon}$ we get
\begin{equation}
\varphi(\widetilde{\tau}_1; \pin_\delta) = \left( 0, \varepsilon(1-\lambda) - \delta e^{-\frac {\rho-\delta}{\varepsilon}}\right)^\top=\mathbf{p}_a^- - \delta e^{-\frac {\rho-\delta}{\varepsilon}}\mathbf{e}_2,  
\end{equation}
which is 
exponentially close to $\mathbf{p}_a^-$. 

Next, we compute the solution with initial condition at the point $\varphi(\widetilde{\tau}_1; \pin_\delta)$  until it reaches the $y$-axis (where it changes from quadrant $Q_2$ to quadrant $Q_1$). Straightforward computations show that for $\widetilde{\tau}_2=\lambda - 1 + \frac{\delta}{\varepsilon}e^{-\widetilde{\tau}_1}$ we get
\begin{equation}
\widetilde{\mathbf{p}}_1:=\varphi(\widetilde{\tau}_2; \varphi(\widetilde{\tau}_1; \pin_\delta) ) = \left(\left( 2\varepsilon   - \delta e^{-\widetilde{\tau}_1} \right) e^{\widetilde{\tau}_2} - \varepsilon  (\lambda+1),0\right)^\top.    
\end{equation}
Since $\frac {\delta}{\varepsilon}e^{-\widetilde{\tau}_1}$ tends to zero with  $\varepsilon$, we write 
\[
e^{\widetilde{\tau}_2}=e^{\lambda-1}\left(1+\frac{\delta}{\varepsilon} e^{-\widetilde{\tau}_1}+O\left( e^{-2\widetilde{\tau}_1}\right)\right)
=e^{\lambda-1}+\frac {\delta}{\varepsilon}e^{\lambda-1-\widetilde{\tau}_1}
+O\left(e^{-2\widetilde{\tau}_1}\right),
\]
and consequently
\[
\widetilde{\mathbf{p}}_1=\mathbf{p}_1+\left(\delta e^{\lambda-1-\widetilde{\tau}_1} +O\left( e^{-2\widetilde{\tau}_1}\right),0\right)^\top,
\]
where $\mathbf{p}_1$ is introduced in Lemma \ref{lem:exist_eps1}. We conclude that $\widetilde{\mathbf{p}}_1$ and $\mathbf{p}_1$ are exponentially close and $\varphi(t;\widetilde{\mathbf{p}}_1)\subset Q_1$ for $t>0$. Therefore 
\[
\varphi(t;\widetilde{\mathbf{p}}_1)=\varphi(t;{\mathbf{p}}_1)+ \left( \left(\delta e^{\lambda-1-\widetilde{\tau}_1} +O\left( e^{-2\widetilde{\tau}_1}\right)\right)e^t,0\right)^\top,
\]
for $t>0$. In particular, for $t=\tau(\varepsilon)$ it follows that 
\[
\varphi(\tau(\varepsilon),\widetilde{\mathbf{p}}_1)=\begin{pmatrix} \rho \\ h(\varepsilon)\end{pmatrix} +
\begin{pmatrix} O\left( \frac {\delta}{\varepsilon} e^{-\widetilde{\tau}_1} \right)\\ 0 \end{pmatrix}. 
\]
This ends the proof of the  statement (a) of Theorem \ref{th:main1}.

\subsection{Proof of Theorem \ref{th:main1delay}}

First, we note that the boundary $O(\varepsilon\ln(|\varepsilon|))$ in the statement (a) follows from Lemma~\ref{lem:exist_eps1}. In addition, the boundary $\varepsilon\,(\lambda+C(\lambda))$ given in the statement (b), follows from Lemmas \ref{lm:asm_behav_a}, \ref{lem:exist_eps0} and \ref{lem:exist_C}. Before proving the boundary $\varepsilon\,(2-\lambda+C(2-\lambda))$ in the statement (a), we present the following lemma:

\begin{lemma}\label{lem:exist_C2}
Consider $\lambda\in (1,2)$. The solution of System \eqref{eq:transBif} with initial condition at the origin, $\varphi(t;\mathbf{0})$, is locally contained in $Q_1$ for $t >0 $ and  
\[
d(\mathcal{S}_{r,\varepsilon}^+,\varphi(2-\lambda+C(2-\lambda);\mathbf{0}))= d(\mathcal{S}_{r,\varepsilon}^+,\varphi(0;\mathbf{0}))e^{2-\lambda+C(2-\lambda)},
\] 
where the function $C(\lambda)$ is defined in \eqref{eq:f}.
\end{lemma}
\begin{proof}
From the expression of the flow in the quadrant $Q_1$ (see \eqref{eq:theFlows-x}-\eqref{eq:theFlows-y}) it follows that $\varphi(t;\mathbf{0})=(x(t),y(t))^\top$ satisfies $y(t)=x(t)-\varepsilon(1-\lambda)-\varepsilon(\lambda-1)e^t$, for $t>0$. The proof follows by taking into account the expression of $\mathcal{S}_{r,\varepsilon}^+$ in \eqref{def:rpslm}.
\end{proof}

Assuming $1<\lambda<2-\ln(2)$, from Lemma \ref{lem:exist_C2}, the distance of the slow manifold $\mathcal{S}_{r,\varepsilon}^+$ to the solution through the origin at time $\tau_0=2-\lambda+C(2-\lambda)$ satisfies that 
\[
d(\mathcal{S}_{r,\varepsilon}^+,\varphi(2-\lambda+C(2-\lambda);\mathbf{0}))> 2 e^{C(\ln(2))}d(\mathcal{S}_{r,\varepsilon}^+,\varphi(0;\mathbf{0}))>2 d(\mathcal{S}_{r,\varepsilon}^+,\varphi(0;\mathbf{0})).
\]
Consequently, the delay in the loss of stability for the solution through the origin is smaller that $\tau_0$. Moreover, from Lemma \ref{lem:exist_eps1} the attracting slow manifold $\mathcal{S}_{a,\varepsilon}^-$ evolves up to the quadrant $Q_1$ through the point $\mathbf{p}_1=(2\varepsilon e^{\lambda-1}-\varepsilon(1+\lambda),0)^\top$. Since the first coordinate of $\mathbf{p}_1$ is positive, the orbit through the origin bounds the orbit through $\mathbf{p}_1$. From this, we conclude that the delay in the loss of stability of $\mathcal{S}_{a,\varepsilon}^-$ is smaller that $\tau_0$. This implies that the maximal delay is smaller than $\varepsilon \tau_0$, which ends the proof of  Theorem \ref{th:main1delay}.

\subsection{Proof of Theorem \ref{th:main2}}

When $\lambda=1$, both slow manifolds do connect at the origin as it follows from the expression of $\mathcal{S}_{a,\varepsilon}^-$ and $\mathcal{S}_{r,\varepsilon}^+$ given in \eqref{def:atrslm} and \eqref{def:rpslm}, respectively. Moreover, the slow manifolds lie on the straight line $y=x$, which is invariant under the flow. Therefore, the orbit 
through $\mathbf{p}_a^-=(0,0)$ remains on the invariant set and thus the maximal delay is unbounded.

However, we may now discuss what happens with the points inside a neighbourhood $N_\delta$ around the slow-manifold. To do that, let us consider an orbit of System \eqref{eq:transBif} entering the neighbourhood $N_{\delta}$ at a point $\mathbf{p}_i=(-\rho,-\rho+\delta)$. By following the flow we are interested in the point $\mathbf{p}_o$ at which it leaves $N_{\delta}$. Notice that, for $\tau_1=\frac {\rho-r}{\varepsilon}$, then  $\mathbf{p}_1=\varphi(\tau_1;\mathbf{p}_{i})=(-\delta e^{-\frac {\rho-\delta}{\varepsilon}},0)^T$, see the expression of the flow in $Q_3$ given in~\eqref{eq:theFlows-x}-\eqref{eq:theFlows-y}.  Moreover, there exists a time of flight 
$$\tau_2=\frac{\delta}{\varepsilon} e^{-\frac {\rho-\delta}{\varepsilon}}+O\left( e^{-2\frac {\rho-\delta}{\varepsilon}} \right),$$
such that 
$$\mathbf{p}_2=\varphi(\tau_2;\mathbf{p}_1)=\left(0,\delta e^{-\frac{\rho-\delta}{\varepsilon}}+O\left( e^{-2\frac {\rho-\delta}{\varepsilon}}\right)\right).
$$
Finally, for the times of flight 
$$\tau_3= \frac {\rho-\delta}{\varepsilon}-\frac {\delta}{\varepsilon}e^{-\frac{\rho-\delta}{\varepsilon}},\text{~~and~~}
\tau_4=\frac {\rho}{\varepsilon}-\frac {\delta}{\varepsilon}e^{-\frac{\rho-\delta}{\varepsilon}},
$$
we obtain 
$$\varphi(\tau_3;\mathbf{p}_2)=
\begin{pmatrix}
\rho-\delta-\delta e^{-\frac {\delta}{\varepsilon} e^{-\frac{\rho-\delta}{\varepsilon}}}+O(e^{-\frac{\rho-\delta}{\varepsilon}})\\
\rho-\delta+O(e^{-2\frac{\rho-\delta}{\varepsilon}})
\end{pmatrix}, \text{~and~}
\varphi(\tau_4;\mathbf{p}_2)=
\begin{pmatrix}
\rho-\delta e^{\frac {\delta}{\varepsilon} (1-e^{-\frac{\rho-\delta}{\varepsilon}})}+O(e^{-\frac{\rho-\delta}{\varepsilon}})\\
\rho+O(e^{-2\frac{\rho-\delta}{\varepsilon}})
\end{pmatrix}.
$$ 
Therefore, we conclude that the exit point $\mathbf{p}_o$ has a second coordinate between $\rho-\delta$ and $\rho$. The same arguments apply if we consider $\mathbf{p}_i=(-\rho,-\rho-\delta)^T$. This ends the proof of the theorem. 

\subsection{Proof of Theorem \ref{th:main3}}

In this section we prove Theorem~\ref{th:main3}. Let us start with the statement (a). Assuming $c>0$ and $\varepsilon>0$ and small enough, the parameter $\lambda=1-e^{-\frac{c}{\varepsilon}}$ is greater than $\ln(2)$ and therefore the solution through $\mathbf{p}_a^-$ evolves from quadrant $Q_4$ to $Q_1$ intersecting the $y$-axis at a point $\mathbf{p}_1$ below the tangent point $\mathbf{p}_t$. In particular, for the time of flight $\tau_1=e^{-\frac {c}{\varepsilon}}+O(e^{-\frac {2c}{\varepsilon}})$ such that $\mathbf{p}_1=\varphi(\tau_1;\mathbf{p}_a^-)=(0,y_1)^T$ with $y_1=\varepsilon e^{-\frac {c}{\varepsilon}}+O(\varepsilon e^{-\frac {2c}{\varepsilon}})$, see Equations \eqref{eq:theFlows-x}-\eqref{eq:theFlows-y}. Moreover, for the time of flight $\tau_2=\frac {c}{\varepsilon}+\ln(\frac{c}{2\varepsilon})$ such that $\varphi(\tau_2;\mathbf{p}_1)=(0+X(\varepsilon),c+Y(\varepsilon))^T$ with 
\[
X(\varepsilon)= \varepsilon \ln\left(\frac c{2\varepsilon}\right) + O(\varepsilon e^{-\frac {c}{\varepsilon}}), \text{~~and,~~}
Y(\varepsilon)= \varepsilon \ln\left(\frac c{2\varepsilon}\right) + O(e^{-\frac {c}{\varepsilon}}).
\]

Let us now prove the statement (b). The parameter $\lambda=1+e^{-\frac{c}{\varepsilon}}$ is greater than $1$ and therefore the solution through $\mathbf{p}_a^-$ evolves from quadrant $Q_3$ to $Q_1$ intersecting the $x$-axis at a point $\mathbf{p}_1$. In particular, there exists a time of flight $\tau_1=e^{-c/\varepsilon}$ such that $\mathbf{p}_1=\varphi(\tau_1;\mathbf{p}_a^-)=(x_1,0)^T$ with $x_1=\varepsilon e^{-c/\varepsilon}+O(e^{-2c/\varepsilon})$, see Equations \eqref{eq:theFlows-x}-\eqref{eq:theFlows-y}. Moreover, for $\tau_2=\frac {c}{\varepsilon}+\ln(\frac{c}{2\varepsilon})$ it can be checked that $\varphi(\tau_2;\mathbf{p}_1)=(2c + X(\varepsilon),c+Y(\varepsilon))^T$ with 
\[
X(\varepsilon)=  \varepsilon \ln\left(\frac c{2\varepsilon}\right) + O(\varepsilon e^{-\frac {c}{\varepsilon}}), \text{~~and,~~} Y(\varepsilon)=  \varepsilon \ln\left(\frac c{2\varepsilon}\right).
\]

\section{Acknowledgements}
AET is partially supported by the MCIU project
PID2020-118726GB-I00 and by the Ministerio de Economia y Competitividad through the project MTM2017-83568-P (AEI/ERDF,EU). APC thanks the Departament de Matemàtiques i Informàtica of the UIB and the Instituto de Fisica Interdisciplinar y Sistemas Complejos (IFISC, UIB-CSIC) for hosting him during pandemic times. APC acknowledges support from Spanish Ministry of Science and Innovation grants (Projects No. PID2021-124047NB-I00 and PID-2021-122954NB-100). 

\bibliographystyle{abbrv}  
\bibliography{refs}





\newpage
\appendix
\section{Local expressions for the flow}\label{sec:appendix}

We provide local expressions for the flow $\varphi(t;(x_0,y_0),\lambda,\varepsilon)$ of the PWL  minimal model in \eqref{eq:transBif} depending on $(x_0,y_0)$ is located in  each of the four quadrants of the phase-space, which we denote by $Q_i$ (with $i=1,\dots,4$) following the clockwise convention. 
For simplify notation, when no confusion arises we will avoid make explicit the dependence of the flow with respect to the parameters $\lambda$ and $\varepsilon$.
In particular, the first coordinate of the flow is given by 
\begin{equation}\label{eq:theFlows-x}
\varphi_1(t;(x_0,y_0))=\left\{
\begin{array}{ll}
 (x_0 - y_0 + (\lambda-1)\varepsilon)e^{t} + \varepsilon t + y_0 -  (\lambda-1)\varepsilon &
 (x_0,y_0)\in Q_1,\\
 (x_0 + y_0 + (1+\lambda)\varepsilon)e^{t} -\varepsilon t - y_0 - (\lambda+1)\varepsilon &
 (x_0,y_0)\in Q_2,\\
 (x_0 - y_0 + (1-\lambda)\varepsilon)e^{-t} + \varepsilon t + y_0 +  (\lambda-1)\varepsilon &
 (x_0,y_0)\in Q_3,\\ 
 (x_0 + y_0 - (1+\lambda)\varepsilon)e^{-t} -\varepsilon t - y_0 + (\lambda+1)\varepsilon &
 (x_0,y_0)\in Q_4, 
\end{array}
\right.
\end{equation}
whereas the second coordinate is given by 
\begin{equation}\label{eq:theFlows-y}
\varphi_2(t;(x_0,y_0))=y_0+\varepsilon t.
\end{equation}

Here we introduce the following technical result.

\begin{lemma}\label{lem:tech}
Consider the linear differential system $\dot{x}=x-y+\varepsilon \lambda,\ \dot{y}=\varepsilon$ defined in whole $\mathbb{R}^2$, and let $\phi(t;\mathbf{p})$ be the solution with initial condition at the point $\mathbf{p}_d=\left( (\rho-\varepsilon d+(\lambda-1)\varepsilon)e^{-d}-(\lambda-1)\varepsilon,0\right)^\top$ with $d>0$. Then, $\phi(d; \mathbf{p}_d)= (\rho,\varepsilon d)^\top$. 
\end{lemma}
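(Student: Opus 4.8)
The plan is to verify Lemma~\ref{lem:tech} by a direct computation using the explicit solution of the linear system, so this is really a bookkeeping exercise rather than a conceptual one. First I would observe that the system $\dot{x}=x-y+\varepsilon\lambda$, $\dot{y}=\varepsilon$ is exactly the vector field of \eqref{eq:transBif} restricted to the first quadrant $Q_1$ (where $|x|=x$ and $|y|=y$), but now considered on all of $\mathbb{R}^2$; hence its flow is given by the first line of \eqref{eq:theFlows-x} together with \eqref{eq:theFlows-y}, namely
\[
\phi_1(t;(x_0,y_0)) = (x_0-y_0+(\lambda-1)\varepsilon)e^{t}+\varepsilon t+y_0-(\lambda-1)\varepsilon,\qquad \phi_2(t;(x_0,y_0))=y_0+\varepsilon t.
\]

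Next I would substitute the initial condition $\mathbf{p}_d=\big((\rho-\varepsilon d+(\lambda-1)\varepsilon)e^{-d}-(\lambda-1)\varepsilon,\,0\big)^\top$ and the time $t=d$ into these formulas. For the second coordinate this is immediate: $\phi_2(d;\mathbf{p}_d)=0+\varepsilon d=\varepsilon d$, as claimed. For the first coordinate, with $y_0=0$ the combination $x_0-y_0+(\lambda-1)\varepsilon$ equals $(\rho-\varepsilon d+(\lambda-1)\varepsilon)e^{-d}$, so multiplying by $e^{d}$ gives back $\rho-\varepsilon d+(\lambda-1)\varepsilon$; then adding $\varepsilon d+y_0-(\lambda-1)\varepsilon=\varepsilon d-(\lambda-1)\varepsilon$ collapses the expression to $\rho$. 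Hence $\phi(d;\mathbf{p}_d)=(\rho,\varepsilon d)^\top$.

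There is essentially no obstacle here: the only thing to be slightly careful about is justifying the use of the $Q_1$ flow formula, which is legitimate precisely because the lemma states the linear system is ``defined in whole $\mathbb{R}^2$'', so no quadrant-crossing or Filippov issues arise and the closed-form exponential solution is valid for all $t$ and all initial data. I would therefore present the proof as: recall that $\phi$ is the affine flow \eqref{eq:theFlows-x}--\eqref{eq:theFlows-y} (first line), plug in $\mathbf{p}_d$ and $t=d$, and simplify. The point of the lemma in context is that $\mathbf{p}_d$ is reverse-engineered so that its forward image at time $d$ lands exactly on the section at height $\rho$ with ordinate $\varepsilon d$; this is exactly the comparison orbit used in the proof of Lemma~\ref{lem:exist_eps1} to bound $\varphi(t;\mathbf{p}_1)$ in $Q_1$, so it suffices to record the identity cleanly.
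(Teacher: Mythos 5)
Your proposal is correct and coincides with the paper's own argument: the paper also proves Lemma~\ref{lem:tech} by direct computation, substituting $\mathbf{p}_d$ and $t=d$ into the $Q_1$ expression of the flow in \eqref{eq:theFlows-x}--\eqref{eq:theFlows-y}, which is legitimate here since the linear system is taken on all of $\mathbb{R}^2$. Your simplification of the first coordinate to $\rho$ and of the second to $\varepsilon d$ is exactly the intended verification, so nothing is missing.
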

\begin{proof}
The proof follows by direct computations from the expression of the flow in $Q_1$ given in  $\eqref{eq:theFlows-x}-\eqref{eq:theFlows-y}$.
\end{proof}



\end{document}